\newcommand{\rp}{$\mathbb{RP}^2$}
\newcommand{\pg}{$\operatorname{PGL}(3,\mathbb{R})$}
\newcommand{\g}{$\operatorname{GL}(3,\mathbb{R})$}
\newcommand{\R}{$\mathbb{R}^3$}
\newtheorem{definition}{Definition}
\newtheorem{proposition}[definition]{Proposition}
\newtheorem{theorem}[definition]{Theorem}
\title{The Hilbert area of inscribed triangles and quadrilaterals}
\author{Scott A. Wolpert}
\address{Department of Mathematics\\University of Maryland\\College Park, MD 20742 USA}
\email{saw@math.umd.edu}
\date{\today}
\subjclass[2010]{57M50, 58D27}
\keywords{Real projective plane, flags, Fock-Goncharov invariants, Hilbert metric}
\begin{document}

\begin{abstract}
Hilbert volume is an invariant of real projective geometry.  Polygons inscribed in polygons are considered for the real projective plane.  In two-dimensions the correspondence between Fock-Goncharov and Cartesian coordinates is examined.  Degeneration and Hilbert area of inscribed quadrilaterals are analyzed.  A microlocal condition is developed for bounded Hilbert area under degeneration.  The condition is applied to give a sequence of strictly convex domains with bounded Hilbert area and divergent Goldman parameters.     
\end{abstract}

\maketitle

\section{Introduction.}

A convex real projective structure for a closed surface $S$ of genus at least $2$ is given by a properly discontinuous action of a subgroup of \pg\ on a strictly convex domain in \rp.   The deformation space of marked projective structures on the surface $S$ is a component of the space of representations of $\pi_1(S)$ into \pg\ modulo \pg\  conjugation, the \pg\  representation variety.  A proper convex domain in \rp\ is equipped with a natural Finsler metric, the Hilbert metric, derived from the cross ratio on $\mathbb{RP}^1$.  The Hilbert metric and area form are invariant under the \pg\ action on a domain.  For the special case of the interior of a conic, the Hilbert metric is the hyperbolic metric, presented in the Beltrami-Klein model.  In general a convex real projective surface has elementary geometric invariants, including the systole, diameter, length spectrum and area. A Finsler metric also provides for a geodesic flow and topological entropy - the exponential growth rate of orbit counts in increasing metric balls in the universal cover.  Colbois-Vernicos-Verovic \cite{CVV2} showed that the Hilbert metric for a bounded convex domain  in $\mathbb{R}^n$ is Gromov hyperbolic if and only if there is a bound for the Hilbert area of ideal triangles for the domain.  

In seminal works Goldman \cite{Gdprj} and later Fock-Goncharov \cite{FGmcps} introduced global coordinates for the deformation space of marked convex real projective structures on a surface.  The deformation space is investigated by considering the large-scale behavior of geometric invariants.  Nie considered the  one-dimensional families of orbifolds defined by reflections in the faces of projective simplices \cite{Nie}.  He showed that in the deformation families the diameters tend to infinity and topological entropies tend to zero.   He further showed that the domains of discontinuity for the hyperbolic Coxeter groups limit in the Gromov-Hausdorff topology to their defining simplices.  Zhang considered strictly convex real projective structures for closed surfaces and special deformations - diverging Goldman internal pants parameters \cite{Zhgdeg}.  He found that systoles tend to infinity and topological entropies tend to zero. Foulon-Kim considered deforming structures by taking Goldman's bulging parameters to infinity \cite{FoKi}.  They found that Hilbert area tends to infinity and that Gromov-Hausdorff limits include Euclidean half cylinders.  Foulon-Kim found for Goldman's bulging deformation on a pants decomposition of the surface that the topological entropy limits to its maximum value for a pair of pants.  More generally Kim considered the space $\mathcal{A}_2$ of Anosov representations for the free group on two generators \cite{Kideg}. He described explicitly pants and tori degenerations in the boundary of $\mathcal{A}_2$.  Sun considered the space of unmarked strictly convex representations for surfaces with boundaries \cite{SunZ}. Sun in the manner of Mirzakhani, presented bounds for the Goldman symplectic volume of the subset of unmarked representations with bounded invariants.  

We are interested in the behavior of the Hilbert area in the elementary setting of convex polygons inscribed in convex polygons. The Hilbert area of convex polygons inscribed in convex polygons can be used to estimate the Hilbert area of convex structures for surfaces.  Colbois-Venicos-Verovic \cite{CVV} gave bounds for the Hilbert area form for a rectangle and used geometric arguments to provide lower and upper bounds for the area of ideal triangles.  They found that all ideal triangles within a domain have the same area if and only if the domain is an ellipsoid. Adeboye-Cooper \cite{AdCo} derived an exact formula for a modified-definition Hilbert area of a triangle and also derived the  lower and upper bounds for the area of ideal triangles.  They applied their formula to study the area of ideal triangles in properly convex domains.

Beginning with the work of Fock-Goncharov \cite{FGmcps}, we consider inscribed polygons in the real projective plane \rp. We explore the behavior of the Fock-Goncharov parameterization including analyzing degeneration and Hilbert area.  The analysis involves the relation between the Fock-Goncharov triangle triple ratio and edge parameters and  Cartesian coordinates for polygon vertices.  To bound Hilbert area we use the comparison principle, the elementary bounds for rectangles and coverings by overlapping rectangles.  We begin with the known case of inscribed triangles.  Then we apply our approach to develop the main result Theorem \ref{main}, on the behavior of Hilbert area for inscribed quadrilaterals.  The real projective linear group \pg\ acts on \rp\ and can be used to normalize four points.  Accordingly the moduli for a quadrilateral with specified triple ratio invariants is the location of the fourth vertex within an affine triangle. Representing the moduli triangle with vertices $(0,0), (1,1)$ and $(0,2)$, and a specified flag lines intersection (see Figure 2), we show in Theorem \ref{main} that the Hilbert area approaches infinity as the fourth vertex approaches the triangle boundary with a possible exception at $(0,0)$ and $(0,2)$. In particular if $(x,y)$ approaches $(0,0)$ with $y/x$ approaching $1$, then the Hilbert area is comparable to $1+((y/x)-1)\log 1/x$; with a corresponding condition at $(0,2)$. Diverging Hilbert area corresponds to $x$ approaching $0$ exponentially faster than the slope $y/x$ approaches unity.  The condition is microlocal - stated in terms of point and slope.  Analyzing inscribed vertices converging to a shallow angle vertex is the key step in establishing the theorem. The analysis is provided in Proposition 5 - analyzing two vertices of an inner polygon approaching a vertex of an outer polygon, \emph{while} the angle at the outer vertex is becoming straight (tending to $\pi$).   In a concluding discussion we consider the relation between the Fock-Goncharov parameters and the Goldman twist-bulge parameters.  We explore Kim's postulation that bounding Hilbert area bounds bulging parameters.   An example sequence is given of strictly convex domains with bounded Hilbert area and divergent bulging parameters.   

I would like to thank William Goldman, Inkang Kim and especially Zhe Sun for many conversations and suggestions.   

\section{Preliminaries.}
We review the expositions \cite{CTT,FGmcps}.   
The real projective plane \rp\ is the space of lines through the origin in \R. Scalar multiplication defines an action of $\mathbb{R}^{\times}$ on $\mathbb{R}^3-\{0\}$; \rp\ is the resulting quotient space.  A point (resp. line) in \rp\ corresponds to a line (resp. plane) through the origin in \R.  Writing column vectors for the points of \R, a point in the projective plane is an equivalence class $\{\lambda(\alpha,\beta,\gamma)^{\top}\mid\lambda\in\mathbb{R}^{\times}\}$ and a line in the projective plane is an equivalence class $\{(a,b,c)\cdot(\alpha,\beta,\gamma)=0)\}$ for $(a,b,c)$ a vector in the dual of \R.  The linear group
\g\ acts on \R\ and the projective linear group \pg\ acts on the quotient \rp.   A set of points in \rp\ is collinear if there is a line containing the points.  A set of lines is concurrent if the lines have a point in common.  A set is convex if its intersection with each line is connected.   Four points (resp. four lines) are in general position if no three are collinear (resp. incident on  a point).  The action of \pg\ is simply transitive on ordered $4$-tuples of points or lines in general position.   Given a triple of lines in general position, an element of \g\ transforms the planes to coordinate planes.  Accordingly a pair of general position lines intersects in a single point and separates \rp\ into two convex sectors.   Accordingly a triple of general position lines intersects in three points and separates \rp\ into four convex triangular regions. Following Fock-Goncharov we denote points (resp. lines) by upper case (resp. lower case) letters.   A {\it flag} for \rp\ consists of a point on a line.   We use corresponding letters to denote the components of a flag; $A$ is the point on the line $a$ and may refer to flag by its line.   We say that $ABC$ is inscribed in $abc$ provided $ABC$ is inscribed in one of the four triangles.   We will study inscribed triangles and quadrilaterals.  

We consider the cross ratio and triple ratio.   The {\it cross ratio} of four points $x_1,x_2,x_3,x_4$ on a line is the value at $x_4$ of a projective coordinate for the line taking value $\infty$ at $x_1$, $-1$ at $x_2$ and $0$ at $x_3$.   Equivalently for an affine coordinate on the line the cross ratio is $\frac{(x_1-x_2)(x_3-x_4)}{(x_1-x_4)(x_2-x_3)}$.  Similarly the set of lines incident on a point form a projective line.  The cross ratio of an ordered quadruple of incident lines is defined.   Equivalently the cross ratio of an ordered tuple of incident lines is  determined by the intersection points with a general reference line.   The triple ratio of a labeled triple of flags $AaBbCc$ is defined as follows.  Choose linear functionals $f_a,f_b,f_c$ representing the projective lines and vectors $\tilde A, \tilde B, \tilde C$ representing the projective points. The {\it triple ratio} is defined as 
\begin{equation}
\label{triple}
\frac{f_a(\tilde B)f_b(\tilde C)f_c(\tilde A)}{f_a(\tilde C)f_b(\tilde A)f_c(\tilde B)}.
\end{equation}
The triple ratio is independent of the choices and depends only on the cyclic ordering of the flags.  Fock-Goncharov give alternate descriptions of the triple ratio.  

We consider the Hilbert metric and area.  Suppose $\Omega$ is a convex set contained in an affine set with Euclidean norm $|\cdot|$ and distance $d(\cdot,\cdot)$.  For distinct points $x,y$, let $p,q$ be the intersection points of the line $xy$ with $\partial\Omega$ such that $p,x,y,q$ are in order, then the Hilbert distance is
$$
d_{\Omega}(x,y)\,=\,\frac12 \log\frac{|p-y||q-x|}{|p-x||q-y|}.
$$
The distance has an infinitesimal form.  For $(x,v)$ a tangent vector at $x$ then
$$
\|v\|\,=\,\frac12\big(\frac{1}{|x-p|}\,+\,\frac{1}{|x-q|}\big)\,|v|,
$$
where $p,q$ are the intersection points with $\partial\Omega$ of the line at $x$ with tangent $v$. The Hilbert volume is defined
 for Borel sets $\mathcal{A}\subset \Omega$ by
 $$
 Vol_{\Omega}(\mathcal{A})\,=\,\int_{\mathcal{A}}\frac{\pi}{vol(B_x(1)))}\,dvol(x),
 $$ where $B_x(1)=\{v\in T_x\Omega\mid \|v\|<1\}$ and $dvol$ is the Lebesgue measure for the affine set.  The Hilbert metric and volume satisfy a {\it domain comparison principle} that is basic to our considerations.  For $\Omega\subseteq\Omega'$ and $x,y\in\Omega$, $\mathcal{A}\subset \Omega$ then
 $$
 d_{\Omega'}(x,y)\,\le d_{\Omega}(x,y)
 $$
 and
 $$ 
 Vol_{\Omega'}(\mathcal{A})\le Vol_{\Omega}(\mathcal{A}).
 $$
 
 \section{Inscribed triangles and quadrilaterals}
 Fock-Goncharov study the space of $\mathcal{P}^n_3$ of pairs of convex $n$-gons, one inscribed in the other, considered modulo the \pg\ action.  The space can be considered as a discrete approximation to the space of closed convex curves with the inner polygon representing the closed curve and outer polygon representing the tangent lines.  By identifying along sides, an inscribed $n$-gon is described as a gluing of inscribed triangles.  We study the parameters for gluing triangles. First we review the single parameter describing a labeled triple flag. See \cite[Fig. 1]{FGmcps} and Figure 1. 
 
 \begin{figure}
 	\centering
 	  \makebox[11cm][c]{\includegraphics[width=1.7\linewidth]{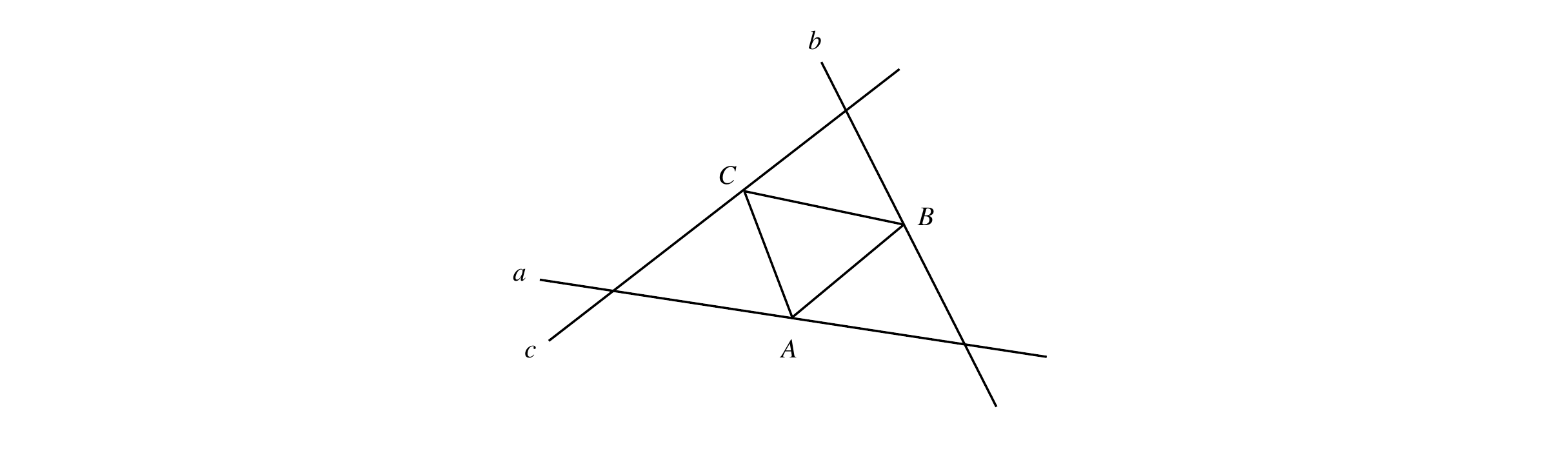}}
 	\caption{Inscribed triangles.}
 	\label{fig:triangle}
 \end{figure}

 Given a triple of flags $AaBbCc$ in general position, an element of \pg\ is selected to move the intersections of lines to the points representing the \R\ coordinate axes.  Diagonal matrices provide the stabilizer of the coordinate axes.  Two flag points can be further normalized.  The location of the third flag point characterizes the configuration modulo \pg.  Fock-Goncharov define the triple ratio invariant valued in $\mathbb{R}-\{0\}$ by the triple ratio of the flag.  The invariant is replaced by its reciprocal when the orientation of the flag is reversed.   The invariant is defined in an affine chart as follows.   The labeling orders the lines.  Write each flag point as an affine combination of the intersection points of lines: $A=(1-t_A)a\cap c+t_A a\cap b$, $B=(1-t_B)a\cap b+t_B b\cap c$ and $C=(1-t_C)b \cap c+t_C c\cap a$.  The $t$-parameters are in the interval $(0,1)$ precisely when in the affine chart the flag points are in between the intersection points.   
 \begin{proposition}\label{triag}
 The triple ratio invariant of $AaBbCc$ is $$
 \frac{t_A}{1-t_A}\frac{t_B}{1-t_B}\frac{t_C}{1-t_C}.
$$.  
 \end{proposition}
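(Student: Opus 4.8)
The plan is to reduce the statement to a one-line coordinate computation by exploiting that both sides are built from quantities that behave well under the \pg\ normalization already described, namely the placement of the three line–intersection points at the coordinate points of \rp.

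First I would apply an element of \pg\ carrying $a\cap b$, $b\cap c$, $c\cap a$ to the coordinate points $E_1=(1,0,0)^{\top}$, $E_2=(0,1,0)^{\top}$, $E_3=(0,0,1)^{\top}$ respectively; this is possible because the three points are in general position, and it changes neither the triple ratio \eqref{triple} nor the $t$-parameters. Under this normalization the line $a$, which contains $a\cap b=E_1$ and $c\cap a=E_3$, is the coordinate line $\{v_2=0\}$; the line $b$, containing $a\cap b=E_1$ and $b\cap c=E_2$, is $\{v_3=0\}$; and the line $c$, containing $b\cap c=E_2$ and $c\cap a=E_3$, is $\{v_1=0\}$. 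Accordingly one may take the representing functionals to be the coordinate functionals $f_a(v)=v_2$, $f_b(v)=v_3$, $f_c(v)=v_1$, the triple ratio being insensitive to their scaling.

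Next I would lift the flag points using the defining affine combinations: in a fixed affine chart in which those combinations are formed, $A=(1-t_A)\,a\cap c+t_A\,a\cap b$ is represented by $\tilde A=t_A E_1+(1-t_A)E_3=(t_A,0,1-t_A)^{\top}$, and likewise $\tilde B=(1-t_B)E_1+t_B E_2=(1-t_B,t_B,0)^{\top}$ and $\tilde C=(1-t_C)E_2+t_C E_3=(0,1-t_C,t_C)^{\top}$. Since the triple ratio is independent of the choice of representing vectors, these lifts may be used. Substituting into \eqref{triple}, the numerator is $f_a(\tilde B)f_b(\tilde C)f_c(\tilde A)=t_B t_C t_A$ and the denominator is $f_a(\tilde C)f_b(\tilde A)f_c(\tilde B)=(1-t_C)(1-t_A)(1-t_B)$, which gives the triple ratio $\dfrac{t_A}{1-t_A}\dfrac{t_B}{1-t_B}\dfrac{t_C}{1-t_C}$, as claimed.

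The computation itself is immediate, so the only place requiring attention is the bookkeeping: matching the cyclic conventions "$A$ between $a\cap c$ and $a\cap b$, $B$ between $a\cap b$ and $b\cap c$, $C$ between $b\cap c$ and $c\cap a$" to the coordinate normalization, so that the correct coordinate functional is paired with each flag line. A secondary subtlety is that the individual $t$-parameters depend on the chosen affine chart whereas the triple ratio does not; this is harmless, since the triple ratio may be evaluated with the chart-adapted representatives above, whence the identity holds for the $t$-parameters of that chart, and a posteriori the product $\prod t_i/(1-t_i)$ is seen to be chart-independent.
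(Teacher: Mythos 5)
Your proposal is correct and takes essentially the same route as the paper: normalize by \pg, choose explicit linear functionals for the three lines and explicit lifts of the flag points written in terms of the $t$-parameters, and evaluate the triple ratio (\ref{triple}) directly. The only difference is cosmetic --- you place the vertices at the coordinate points and use the barycentric-type chart $v_1+v_2+v_3=1$, while the paper computes in the chart with vertices $(0,0),(1,0),(0,1)$ and section $(x,y,1)$ --- and your closing remark on chart-dependence of the individual $t$-parameters is at least as careful as the paper's implicit treatment of that point.
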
  
\begin{proof}
	Use \pg\ to transform the flag so that for homogeneous coordinates $(\alpha,\beta,\gamma)$ and inhomogeneous coordinates $x=\alpha/\gamma, y=\beta/\gamma$,  we have $a$ is the line $y=0$, $b$ is the line $y=1-x$ and $c$ is the line $x=0$.  The vertices of $abc$ are $(0,0), (1,0)$ and $(0,1)$.  A section for the inhomogeneous coordinates is given by $(x,y,1)$.  The flag points are described as affine combinations as follows: $A$ is given as $(x_0,0)$ with $t_A=x_0$, $B$ is given as $(1-y_1,y_1)$ with $t_B=y_1$ and $C$ is given as $(0,y_2)$ with $t_C=1-y_2$.  The lines are described in homogeneous coordinates as $a$ is given as $(0,1,0)\cdot(\alpha,\beta,\gamma)=0$, $b$ is given as $(1,1,-1)\cdot(\alpha,\beta,\gamma)=0$ and $c$ is given as $(1,0,0)\cdot(\alpha,\beta,\gamma)=0$.  The triple ratio is
	\begin{multline*}
	\frac{(0,1,0)\cdot(1-t_B,t_B,1)(1,1,-1)\cdot(0,1-t_C,1)(1,0,0)\cdot(t_A,0,1)}{(0,1,0)\cdot(0,1-t_C,1)(1,1,-1)\cdot(t_A,0,1)(1,0,0)\cdot(1-t_B,t_B,1)}\\
	=\, \frac{t_A}{1-t_A}\frac{t_B}{1-t_B}\frac{t_C}{1-t_C}.
	\end{multline*}  
\end{proof}    
The above considerations are valid without the hypothesis that the flag points are between the intersection points.  General position provides that the flag points and intersection points are distinct.  Accordingly the triple ratio invariant is negative precisely when one or three flag points are not between the corresponding intersection points.  Fock-Goncharov \cite[Lemma 2.3]{FGmcps} note that the triple ratio invariant is negative if and only if the triangle $ABC$ is not inscribed in one of the four triangles determined by $abc$.

We are ready to study inscribed quadrilaterals and the Fock-Goncharov edge parameters for attaching triangles.  We start with a quadruple of flags  $AaBbCcDd$ in general position and select an element of \pg\ to normalize in inhomogeneous coordinates $(x,y)$,
\begin{equation}
	\label{norm} 
	A \mbox{ to } (0,0),\ B\mbox{ to } (1,1),\ C \mbox{ to } (0,2) \mbox{ and } a\cap c \mbox{ to } (-1,1).
\end{equation}  
See \cite[Fig. 2]{FGmcps} and Figure 2.  

\begin{figure}
	\includegraphics[width=2.2\linewidth]{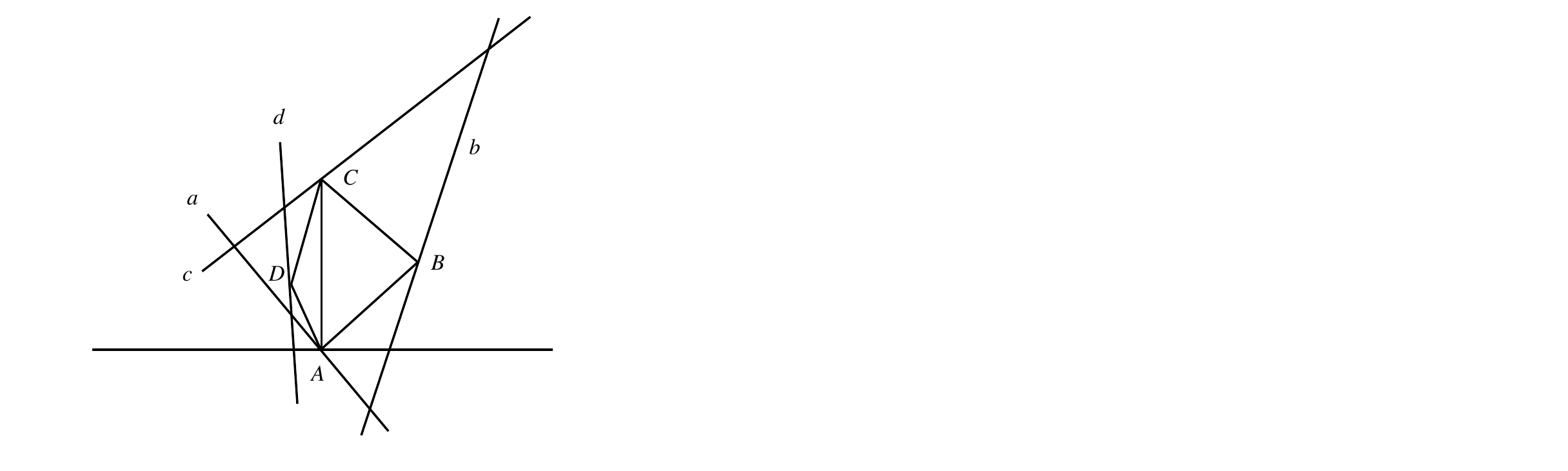}
	\caption[Inscribed quadrilaterals.]{An inscribed quadrilateral with normalization (\ref{norm}). }
	\label{fig:quadrilateral}
\end{figure}

\begin{proposition}
	\label{edgepar}
	With the Fock-Goncharov notation.  For the edge parameters $W,Z$,
	$$
	\mbox{the line AD has the equation}\;  y\,=\,(\frac{-2}{Z}-1)x
	$$
	and 
	$$
	\mbox{the line CD has the equation}\; y\,=\,(2W+1)x\,+\,2.
	$$
	The point $D$ has the coordinates $(\mu,\nu)$ with
	$$
	\mu\,=\,-F(V,W)\quad\mbox{and}\quad
		\nu\,=\,(1+2V)F(V,W),
	$$
	for 
	$$
	V\,=\,\frac{1}{Z}\quad \mbox{and}\quad F(V,W)\,=\,\frac{1}{W+V+1}.
	$$
	The inverse relations are
	$$
	W\,=\,\frac{2-\nu+\mu}{-2\mu}\quad\mbox{and}\quad Z=
	\frac{-2\mu}{\mu+\nu}.
	$$
	For $Y$ the triple ratio invariant of $ACD$ and $m$ the slope of the line $d$ then 
	$$
	Y\,=\,\frac{(2-\nu+\mu)(\nu-\mu m)}{(\nu+\mu)(-2+\nu-\mu m)}.
	$$
	The reflection in the line $y=1$ fixes $B$ and $a\cap c$, interchanges the flags $Aa$ and $Cc$, reverses orientation, replaces $\nu$ with $2-\nu$, replaces $m$ with $-m$, and replaces $W$ with $1/Z$, $Z$ with $1/W$, and $Y$ with $1/Y$.   The above formulas transform accordingly.  
\end{proposition}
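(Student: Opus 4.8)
The plan is to carry out everything in the inhomogeneous coordinates of the normalization (\ref{norm}), using the section $(x,y)\mapsto(x,y,1)$ for homogeneous coordinates. First I would pin down the lines $a$ and $c$: the line $a$ contains $A=(0,0)$ and $a\cap c=(-1,1)$, so $a$ is $y=-x$; the line $c$ contains $C=(0,2)$ and $a\cap c=(-1,1)$, so $c$ is $y=x+2$. The lines through $A$ already present in the configuration are then $a$ (slope $-1$), the diagonal $AC$ (the vertical line $x=0$), and $AB$ (slope $1$, since $B=(1,1)$); symmetrically, the lines through $C$ already present are $c$ (slope $1$), $CA$ ($x=0$), and $CB$ (slope $-1$). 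These are the only geometric inputs the normalization supplies for this proposition; the line $b$ plays no role.

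Next I would recall the Fock-Goncharov description (\cite[Fig. 2]{FGmcps}) of the diagonal edge parameters as cross ratios of the concurrent pencils of lines through the endpoints of $AC$: $Z$ is the cross ratio of the ordered quadruple $(a,AB,AC,AD)$ of lines through $A$, and $W$ is the cross ratio of the ordered quadruple $(c,CD,CA,CB)$ of lines through $C$. Using ``slope'' as an affine coordinate on each pencil and the cross ratio formula $\frac{(x_1-x_2)(x_3-x_4)}{(x_1-x_4)(x_2-x_3)}$ from the Preliminaries (the vertical line $AC$ entering as the limiting value $\infty$), these two cross ratios determine the remaining slopes: $AD$ has slope $-2/Z-1$ and $CD$ has slope $2W+1$, and since $AD$ passes through $(0,0)$ and $CD$ through $(0,2)$ we obtain the two displayed line equations. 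Intersecting the two lines is a $2\times 2$ linear system whose solution is $\mu=-F(V,W)$, $\nu=(1+2V)F(V,W)$ with $V=1/Z$; computing $\mu+\nu=2VF$ and $-2\mu=2F$ (equivalently, back-solving the same system) yields the inverse relations $W=\frac{2-\nu+\mu}{-2\mu}$, $Z=\frac{-2\mu}{\mu+\nu}$.

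For the triple ratio $Y$ of $ACD$ I would apply the defining formula (\ref{triple}) to the flags $Aa$, $Cc$, $Dd$ in this cyclic order, with functionals $f_a=\alpha+\beta$, $f_c=\alpha-\beta+2\gamma$, and $f_d=m\alpha-\beta+(\nu-m\mu)\gamma$ for the line $d$ through $D=(\mu,\nu)$ of slope $m$, evaluated on $\tilde A=(0,0,1)$, $\tilde C=(0,2,1)$, $\tilde D=(\mu,\nu,1)$. A common factor $2$ cancels between numerator and denominator, leaving precisely $Y=\frac{(2-\nu+\mu)(\nu-\mu m)}{(\nu+\mu)(-2+\nu-\mu m)}$.

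Finally, for the reflection $R\colon(x,y)\mapsto(x,2-y)$ in the line $y=1$ I would check directly that $R$ fixes $B$ and $a\cap c$, interchanges $A$ and $C$, interchanges the lines $a\colon y=-x$ and $c\colon y=x+2$ (hence the flags $Aa$ and $Cc$), reverses orientation, replaces $\nu$ by $2-\nu$, and negates the slope $m$ of $d$; thus after relabeling $A\leftrightarrow C$ the image configuration is again in the normal form (\ref{norm}). The transformed parameters are then read straight off the formulas just established: substituting $\nu\mapsto 2-\nu$ into the inverse relations gives $W\mapsto\frac{\mu+\nu}{-2\mu}=1/Z$ and $Z\mapsto\frac{-2\mu}{2-\nu+\mu}=1/W$, and substituting $\nu\mapsto 2-\nu$, $m\mapsto -m$ into the formula for $Y$, together with $2-\nu+\mu m=-(\nu-\mu m-2)$ and $\mu m-\nu=-(\nu-\mu m)$, gives $Y\mapsto 1/Y$. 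The one genuinely delicate point throughout is matching the Fock-Goncharov conventions exactly: the orderings in the edge cross ratios and the cyclic order of the flags in the triple ratio must be fixed so that the signs and the $W\leftrightarrow 1/Z$ swap come out correctly; once they are fixed, the rest is routine linear algebra and cross-ratio bookkeeping.
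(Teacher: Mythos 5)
Your proposal is correct and follows essentially the same route as the paper: read off the line equations from the cross-ratio definitions of $Z$ and $W$, solve the linear system for $D$, invert, compute $Y$ from the defining triple-ratio formula in homogeneous coordinates, and verify the reflection rules by substitution. The only cosmetic difference is that you compute the $W$ cross ratio directly from the counterclockwise pencil $(c,CD,CA,CB)$ at $C$ (using slope as the pencil coordinate), whereas the paper obtains the same formula by transporting the convention at $A$ through the reflection in $y=1$ (the $1432$ permutation giving the reciprocal); both yield slope $2W+1$ for $CD$.
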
 
\begin{proof}
	Fock-Goncharov \cite[pg. 254]{FGmcps} define the {\it edge parameter} $Z$ by the cross ratio of the lines $a$, $AB$, $AC$, $AD$ in counterclockwise order.  The lines have the following equations and intersections with $y=1$: $a$ has equation $y=-x$ and intersection $-1$, $AB$ has equation $y=x$ and intersection $1$, $AC$ has equation $x=0$ and intersection $0$ and $AD$ has equation $y=\omega x$ and intersection $1/\omega$ for a parameter $-\infty<\omega<-1$.  The resulting cross ratio is $Z=-2/(\omega +1)$ and solving for $\omega$ gives $\omega=(-2/Z)-1$.   The equation $y=(-2/Z-1)x$ for the line $AD$ is established.  The reflection in the line $y=1$ reverses orientation.  The lines at $A$ in counterclockwise order reflect to lines at $C$ in clockwise order.  The edge parameter $W$ is defined by lines in counterclockwise order.  The change from clockwise to counterclockwise order (with the flag line listed first) is given by the permutation $1432$, resulting in a reciprocal for the cross ratio.  The reflection transforms $\omega$ to $-\omega$ and interchanges $A$ and $C$.  We find $W=(-\omega+1)/-2$, $\omega=2W+1$ and $y=(2W+1)x+2$ for the line $CD$.  The formulas for the intersection point $D$ of the lines $AD$ and $CD$ are found by simultaneous solving.  To find the inverse relation start with $\nu/\mu=-(1+2V)$ which gives the formula for $Z$ and then substitute into the equation $\mu=-F(V,W)$.  
	
	To continue we use homogeneous coordinates $(\alpha,\beta,\gamma)$, inhomogeneous coordinates $x=\alpha/\gamma$, $y=\beta/\gamma$ and the coordinate section $(x,y,1)$.  The  line $z$ has equation $y=-x$ and homogeneous equation $(1,1,0)\cdot(\alpha,\beta,\gamma)=0$; the line $c$ has equation $y=x+2$ and homogeneous equation $(1,-1,2)\cdot(\alpha,\beta,\gamma)=0$; the line $d$ has equation $y=m(x-\mu)+\nu$ and homogeneous equation $(m,-1,\nu-\mu m)\cdot(\alpha,\beta,\gamma)=0$. The points are as follows: $A$ is $(0,0,1)$, $C$ is $(0,2,1)$ and $D$ is $(\mu,\nu,1)$. The triple ratio invariant of $ACD$ is given by the triple ratio
	\begin{align*}
	Y&=\frac{(1,1,0)\cdot(0,2,1)(1,-1,2)\cdot(\mu,\nu,1)(m,-1,\nu-\mu m)\cdot(0,0,1)}
	{(1,1,0)\cdot(\mu,\nu,1)(1,-1,2)\cdot(0,0,1)(m,-1,\nu-\mu m)\cdot(0,2,1)}\\
	&=\frac{(2-\nu+\mu)(\nu-\mu m)}{(\nu+\mu)(-2+\nu-\mu m)}.
	\end{align*}
	 The transformation rules follow by inspection. The proof is complete.     
\end{proof}

We note by Proposition \ref{triag} that the invariant $Y$ is positive since $A$ and $C$ are outside of the triangle $adc$.  The point $D$ which is interior to the triangle $ACa\cap c$ is the moduli for the quadrilateral modulo the \pg\ action.  Equivalently the pair $(W,Z)\in(0,\infty)\times(0,\infty)$ is moduli for the quadrilateral. We find that the two parameterizations give rise to different completions for the moduli domain - differences at $A$ and $C$.  We apply the above the formulas and begin with the complement of neighborhoods of $A$ and $C$.  The point $a\cap c$ corresponds to the $(W,Z)$ limiting value $(0,\infty)$.  The open line segment $a\cap c\,A$ corresponds to the $(W,Z)$ limiting values in $(0,\infty)\times \infty$.  Similarly the open line segment $a\cap c\,C$ corresponds to the limiting values $0\times(0,\infty)$.  The open line segment $AC$ corresponds to $(W,Z)$ limiting to $(\infty ,0)$ and $WZ$ limiting to a value $(2-\nu)/\nu$ in $(0,\infty)$ - in particular $\nu=2/(WZ+1)$.  

In contrast at $A$ and $C$ the limiting $(W,Z)$ values depend on the direction of approach.  The limiting values describe the blow ups of the two points.  We consider the description at $A$; the transformation rules provide the description at $C$.  Consider that  $(\mu,\nu)$ is close to $A$ and $\nu=-s\mu$; in particular $s$ is the slope of the line $AD$ with $s\in(1,\infty)$.  Substituting $\nu=-s\mu$ we find $Z=2/(s-1)$ or $s=(2/Z)+1$; as earlier $Z$ parameterizes the slope of $AD$.  From Proposition \ref{edgepar}, $\nu=(1+2/Z)F(1/Z,W)$ and a point $(\mu,\nu)$ is close to $A$ if and only if $\nu$ is close to zero if and only if one of three cases occurs.  First, $W$, $Z$ and $Z/W$ tend to infinity; second, $W$ tends to infinity and $Z$ tends to a non zero value; third, $W$, $Z$ and $W/Z$ tend to infinity.  The distance from $A$ to $D$ is $(1+(1+2V)^2)^{1/2}F(V,W)\approx c'/W$, for a positive constant.   In overall summary, $Z$ describes the slope of $AD$ and $1/W$ is the parameter for the distance from $A$ to $D$.  For $D$ close to $A$ with $\nu=-s\mu$ then the triple ratio invariant $Y$ is close to the limiting value $(s+m)/(1-s)$; the line $d$ has limiting slope $(1-s)Y-s$.  In particular for the limiting value $s=1$ then $d$ has limiting slope $-1$ which coincides with the slope $a$.  Otherwise for $s>1$ the limiting slope of $d$ is strictly less than the slope of $a$ and a {\it corner} forms in the limit.  

\section{Estimating Hilbert area}

We analyze the Hilbert area of inscribed triangles and quadrilaterals.  We use the domain comparison principle and the bounds for the Hilbert area form for a rectangle to derive area bounds.  For a lower bound we use a larger rectangle containing the domain and for an upper bound we use a covering by overlapping rectangles contained within the domain.  Colbois-Vernicos-Verovic \cite[Proposition 6]{CVV} show for a rectangle $\{(x,y)\mid |x|<L, |y|<H\}$ that the Hilbert area element $dHilb$ is bounded as $1/4\, d\mathcal{V}\le dHilb\le 1/2\,d\mathcal{V}$ for
\begin{equation}
\label{dV}
d\mathcal{V}\,=\,\frac{\pi HLdxdy}{(L^2-x^2)(H^2-y^2)}.
\end{equation}
The reader may note the comparison to the hyperbolic metric $R|dz|/(R^2-|z|^2)$ for the radius $R$ disc.  Translating the domain by $L$, the one-dimensional form satisfies the elementary comparison
$$
\frac12\frac{dx}{x}\,\le\,\frac{Ldx}{x(2L-x)}\,\le\,\frac{1}{2(1-\delta)}\frac{dx}{x},
$$
for $0<x<2\delta L,\ 0<\delta <1$. To illustrate our estimation approach we begin by re deriving the existing area bound for an inscribed triangle.  

\begin{figure}
	\centering
	\includegraphics[width=1.0\linewidth]{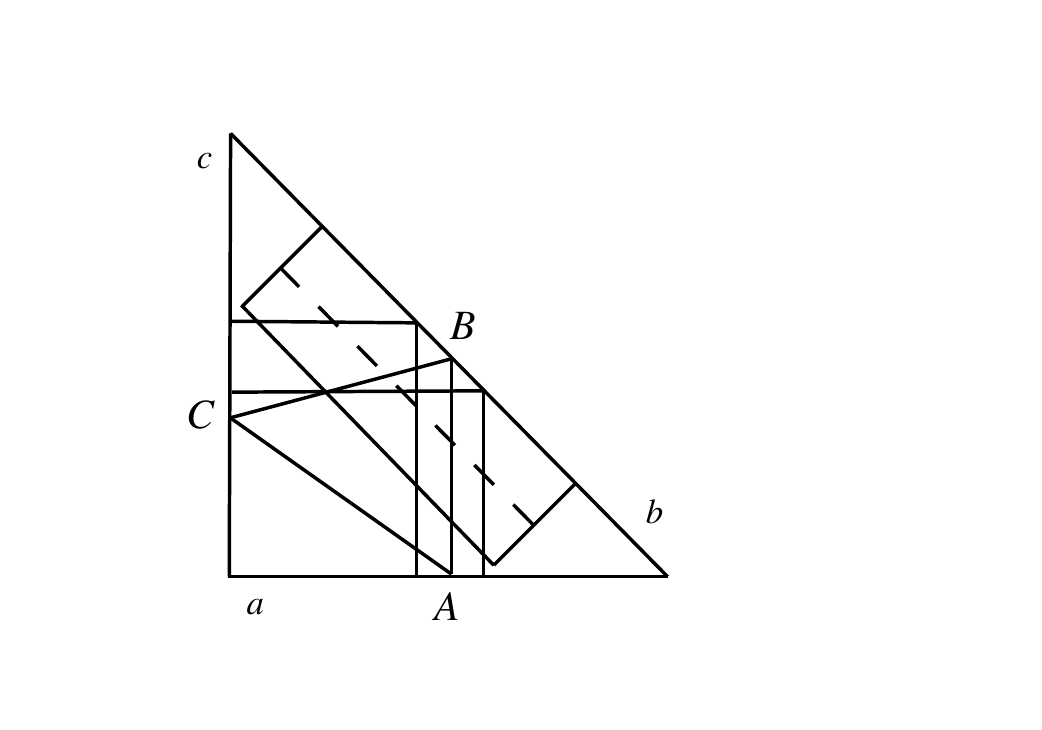}
	\caption{Diagram for the proof of Proposition \ref{tribound}.  The rectangles $R_+$, $R_-$ with edges along the axes and the rectangle $R_0$ with edge along the hypotenuse together cover the triangle $ABC$.  The midline $\mathcal{L}$ of $R_0$ is dotted.  }
	\label{fig:trianglecover}
\end{figure}

\begin{proposition}\label{tribound}
	There are positive constants $c_1$ and $c_2$, such that for an inscribed triangle $AaBbCc$ with invariant $T$, then the Hilbert area of $ABC$ with respect to $abc$ satisfies
	$$
	c_1(1\,+\,(\log T)^2)\,\le\,Vol_{abc}(ABC)\,\le\,c_2(1\,+\,(\log T)^2).
	$$
\end{proposition}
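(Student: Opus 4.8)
The plan is to reduce to an explicit one–parameter family and then estimate the Hilbert area near a corner of $abc$, where it blows up. Reversing the orientation of the flag $AaBbCc$ replaces $T$ by $1/T$ and changes neither $ABC$ nor $abc$, so, since $1+(\log T)^2=1+(\log T^{-1})^2$, I may assume $T\ge 1$. Because the triple ratio is a complete $\operatorname{PGL}(3,\mathbb{R})$-invariant of a triple flag in general position and the Hilbert area is $\operatorname{PGL}(3,\mathbb{R})$-invariant, $Vol_{abc}(ABC)$ depends only on $T$; hence I may compute it for the representative of Proposition~\ref{triag} with $t_A=t$ and $t_B=t_C=\tfrac12$, where $t/(1-t)=T$. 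Then $abc$ is the triangle $(0,0),(1,0),(0,1)$ and $A=(t,0)$, $B=(\tfrac12,\tfrac12)$, $C=(0,\tfrac12)$. Writing $\eta=1-t$, one has $1+(\log T)^2\asymp 1+(\log\tfrac1\eta)^2$ for $T\ge 1$, and as $T\to\infty$ only $A$ moves, sliding along the edge $a$ into the corner $(1,0)$ of $abc$ while $B$ (on $b$) and $C$ (on $c$) stay fixed. It therefore suffices to prove $Vol_{abc}(ABC)\asymp 1+(\log\tfrac1\eta)^2$ as $\eta\to0$; for $T$ in a compact subset of $[1,\infty)$ the two-sided bound is immediate, the corresponding $ABC$ forming a compact family of non-degenerate triangles with vertices at smooth (non-corner) points of $\partial(abc)$, hence of Hilbert area bounded above and below by positive constants.

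Next I would pass to the affine coordinates $(p,q)=(y,\,1-x-y)$, the affine distances to $a$ and $b$; then $abc=\{p>0,\,q>0,\,p+q<1\}$, the corner $(1,0)$ sits at $p=q=0$, and $A=(0,\eta)$, $B=(\tfrac12,0)$, $C=(\tfrac12,\tfrac12)$. The edges $AB$ and $AC$ are $q=\eta(1-2p)$ and $q=\eta+(1-2\eta)p$, so a short computation gives
\[ ABC\cap\{p+q<\tfrac14\}=\bigl\{\,0<p<P_0,\ \eta(1-2p)<q<\eta(1-2p)+p\,\bigr\},\qquad P_0\asymp 1, \]
a slab of $q$-thickness $p$ running alongside the edge $b$ at distance $\asymp\eta$. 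On the complement $ABC\cap\{p+q\ge\tfrac14\}$ the closure of $ABC$ meets $\partial(abc)$ only at the interior edge-points $B$ and $C$ and stays away from every corner; a direct estimate in polar coordinates centred at such a point (where the Hilbert area element is comparable to $\frac{dr\,d\phi}{\sin\phi}$) bounds the Hilbert area of this part by $O(\log\tfrac1\eta+1)$, the logarithm arising because $AB$ becomes tangent to $b$ at $B$ as $\eta\to0$. Since this is $O\bigl((\log\tfrac1\eta)^2\bigr)$, only the corner slab governs the leading behaviour.

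To estimate the corner slab I would sandwich it between rectangles. In the $(p,q)$-coordinates it lies in $R'=(0,\tfrac14)^2\subseteq abc$, so the domain comparison principle and the rectangle bound (\ref{dV}) give
\[ Vol_{abc}\bigl(ABC\cap\{p+q<\tfrac14\}\bigr)\ \le\ \tfrac12\!\int_{ABC\cap\{p+q<1/4\}}\!\!d\mathcal V_{R'}\ \le\ C\!\int_{ABC\cap\{p+q<1/4\}}\!\!\frac{dp\,dq}{pq}; \]
and since $abc\subseteq R''=(0,1)^2$ in these coordinates, the same region satisfies the reverse bound $Vol_{abc}(\,\cdot\,)\ge\tfrac14\int d\mathcal V_{R''}\ge c\int\frac{dp\,dq}{pq}$. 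Everything thus reduces to
\[ \int_{ABC\cap\{p+q<1/4\}}\frac{dp\,dq}{pq}\ =\ \int_0^{P_0}\frac{dp}{p}\,\log\!\Bigl(1+\frac{p}{\eta(1-2p)}\Bigr), \]
which under $p=\eta\tau$ is comparable to $\int_0^{P_0/\eta}\frac{\log(1+\tau)}{\tau}\,d\tau\asymp 1+(\log\tfrac1\eta)^2$. Assembling the pieces yields $Vol_{abc}(ABC)\asymp 1+(\log\tfrac1\eta)^2\asymp 1+(\log T)^2$ for large $T$; with the compact-range bounds and the symmetry $T\leftrightarrow T^{-1}$ this gives the stated inequalities for appropriate $c_1,c_2$.

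The heart of the argument, and where I expect the most care is needed, is this corner analysis: pinning down exactly which region $ABC$ occupies near the corner $(1,0)$ and recognising that the resulting integral produces a \emph{full} square of logarithms rather than a single logarithm. What makes the elementary rectangle bound (\ref{dV}) usable there is the affine change to $(p,q)$, which opens the $\pi/4$-corner of $abc$ into a right angle and presents $abc$ as a standard simplex trapped between the two axis-parallel squares $R'$ and $R''$. The remaining bookkeeping — that the part of $ABC$ away from the corner contributes only $O\bigl((\log\tfrac1\eta)^2\bigr)$ — is routine but must be carried out with the tangency of $AB$ and $b$ at $B$ in mind, and the reduction in the first paragraph relies on completeness of the triple ratio as a projective invariant together with invariance of Hilbert area.
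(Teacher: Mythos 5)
Your argument is correct in substance and runs on the same engine as the paper's proof: reduce to a one--parameter normal form (legitimate, since the triple ratio is a complete $\operatorname{PGL}(3,\mathbb{R})$ invariant of a generic flag triple and Hilbert area is projectively invariant), then use the domain comparison principle together with the rectangle bound (\ref{dV}) to compare the area element with $dp\,dq/pq$ near the corner of $abc$ that the inscribed vertex approaches, where the thin triangular slab produces the square of the logarithm. The differences are bookkeeping. You let $A=(t,0)$ slide into the corner as $T\to\infty$ and estimate the key integral asymptotically via $p=\eta\tau$ and $\int_0^{X}\log(1+\tau)\,\tau^{-1}d\tau\asymp(\log X)^2$, handling the remainder of $ABC$ by polar (sector) estimates at the edge points $B$ and $C$ (your $O(\log(1/\eta))$ term from the near-tangency of $AB$ and $b$) plus a compactness statement for bounded $T$; the paper instead sends $C=(0,\tau)$ to the corner as $T\to 0$, evaluates the same integral exactly as $\pi^2/6+(\log\tau)^2/2+Li_2(1-\tau)$, and gets the upper bound from a three-rectangle cover ($R_+$, $R_-$, $R_0$ with the midline trick) and a case split in $\tau$. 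So your sector/polar bounds and compact-range argument play exactly the role of the paper's sector bounds and compact-subset argument; neither route is more general, and the asymptotic evaluation trades the paper's closed form for less computation.

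One technical point needs repair. With $R'=(0,\tfrac14)^2$ and the cutoff $\{p+q<\tfrac14\}$, the comparison $d\mathcal{V}_{R'}\le C\,dp\,dq/(pq)$ is not valid uniformly on that region: the ratio is $\pi/\bigl(64(\tfrac14-p)(\tfrac14-q)\bigr)$, and near the exit face the region reaches points with $\tfrac14-p$ of order $\eta$, so the implied constant degenerates like $1/\eta$. Likewise the identification of $ABC\cap\{p+q<\tfrac14\}$ with the full slab $\{0<p<P_0\}$ is only approximate because the cut is slanted. Both are fixed at once by either enlarging the inner rectangle to $(0,\tfrac12)^2\subset abc$ or, more simply, cutting the corner region at $\{p<\tfrac18\}$ (so $q<\eta+\tfrac18$ there) and absorbing the discarded strip into the part you already estimate in polar coordinates at $B$; after this adjustment the pointwise comparison holds with a universal constant, the corner region is exactly a slab, and the rest of your computation, including the lower bound via $abc\subset(0,1)^2$, goes through unchanged.
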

\begin{proof}
	As already noted, reversing the flag orientation replaces the triple ratio invariant with its reciprocal.  We now assume $T\le 1$.  Use \pg\ to transform the flags so that for inhomogeneous coordinates $(x,y)$, we have $a$ is the line $y=0$, $b$ is the line $x+y=2$, $c$ is the line $x=0$, $A$ is the point $(1,0)$ and $B$ is the point $(1,1)$.   Since $T\le 1$, $C$ is a point $(0,\tau)$ with $0<\tau\le 1$.   The triangle $ABC$ has upper side line $y=(1-\tau)x+\tau$, lower side line $y=-\tau x+\tau$ and vertical side $x=1$.
	
	We use the larger square $\{(x,y)\mid 0<x<2, 0<y<2\}$ to derive a lower bound for the $abc$ area form.   From (\ref{dV}), the comparison area element is bounded below by $\pi dxdy/16xy$.   The area lower bound is
	\begin{equation}\label{keyintegral}
	\int_0^1\int_{-\tau x+\tau}^{(1-\tau)x+\tau}\frac{dy}{y}\frac{dx}{x}\,=\,\int_0^1-\frac{1}{x}\log(1-x)dx\,+\,\int_0^1\frac{1}{x}\log((\frac{1}{\tau}-1)x+1)dx.  
	\end{equation}
The first integral on the right evaluates to $\pi^2/6$.  The second integrand on the right is analytic at $x=0$.  We can differentiate under the integral to find the $\tau$-derivative 
\begin{multline*}
-\frac{1}{\tau^2}\int_0^1\frac{dx}{(\frac{1}{\tau}-1)x+1}\\=-\frac{1}{\tau(1-\tau)}\log((\frac{1}{\tau}-1)x+1)\bigg\vert_0^1=\frac{1}{\tau(1-\tau)}\log\tau.  
\end{multline*}
By table lookup, a $\tau$-anti-derivative is $Li_2(1-\tau)\,+\,(\log \tau)^2/2$, for the second polylogarithm $Li_2$, a bounded function on the unit-interval.  The polylogarithm and integral $\int_0^1\frac{1}{x}\log((\frac{1}{\tau}-1)x+1)dx$ are both regular for $0< \tau\le1$ and both vanish for $\tau=1$.   We conclude that
$$
\int_0^1\int_{-\tau x+\tau}^{(1-\tau)x+\tau}\frac{dy}{y}\frac{dx}{x}\,=\,\frac{\pi^2}{6}\,+\,\frac{(\log\tau)^2}{2}\,+\,Li_2(1-\tau),
$$
the desired area lower bound.   

For the area upper bound we use a covering of $ABC$ by the three rectangles contained in $abc$.  See Figure \ref{fig:trianglecover}. We use the area elements of the rectangles to bound the area element of the triangle $abc$ on regions of $ABC$.  For a parameter $\delta>0$, the rectangle $R_+$ (resp. $R_-$) has edges along the $x$ and $y$ axes and vertex $(1+\delta,1-\delta)$ (resp. $(1-\delta,1+\delta)$).  The third rectangle $R_0$ has an edge along the line segment from $(0,2)$ to $(2,0)$ and has its dimensions adjusted such that the vertices $(1-\delta,1+\delta)$ and $(1+\delta,1-\delta)$ are interior to the $R_0$ rectangle edge and such that $(1-\delta,1-\delta)$ is interior to the rectangle.  We will make an additional adjustment to $\delta$ in the following. The possible triangles $ABC$ are all contained in the unit square with vertices $(0,0), (1,0), (1,1)$ and $(0,1)$.  Our approach will use bounds for neighborhoods of the vertices of $ABC$ by integrals over sectors $\{(x,y)\mid  -y\le mx \le y,0<y<h\}$, in particular bounding by
$$
\int_0^h\int_{y=-mx}^{y=mx}\,dx\frac{dy}{y}\,=\,\frac{2h}{m}.
$$
Since we seek upper bounds, we may enlarge integration regions to simplify considerations.  We will refer to such a bound as a {\it sector bound}.
We consider two cases for the vertex parameter: $\tau\le 1/4$ and $1/4\le \tau\le 1$.  

For $\tau\le 1/4$, we first use $R_0$ to estimate the area form in a neighborhood of $(1,1)$.  Consider a midline $\mathcal{L}$ of $R_0$, parallel to the edge along the line $(0,2)$ to $(2,0)$ with $\mathcal{L}$ bisecting $R_0$.  See Figure \ref{fig:trianglecover}. Consider the subregion of $ABC$ above the line $\mathcal{L}$.  
We now adjust $\delta$ to ensure that $(1-\delta,1-\delta)$ is above the line $\mathcal{L}$.  
The $R_0$ Hilbert area of the subregion is bounded by a sector bound.  Since $\tau\le 1/4$, the subregion of $ABC$ below $\mathcal{L}$ is now bounded away from the edges $x=1+\delta$ and $y=1-\delta$ of $R_+$.  It follows that on the subregion of $ABC$ below $\mathcal{L}$, the $R_+$ Hilbert area form is bounded above by a multiple of the $dxdy/xy$ area.   The Hilbert area is now bounded by a multiple of the above integral (\ref{keyintegral}), a suitable bound.   

For $1/4\le\tau\le 1$, we consider a neighborhood of each $ABC$ vertex separately.  The vertex at $(1,1)$ is considered above.  The condition $1/4\le\tau\le 1$ provides that for the sectors at $(1,0)$ and $(0,\tau)$ the slopes of the sides of $ABC$ are bounded away from the slopes of $abc$.   The sector bound provides a uniform bound for the Hilbert area of uniform neighborhoods of the vertices $(1,0)$ and $(0,\tau)$.  Finally for $1/4\le\tau\le 1$, the union of the triangles $ABC$ with neighborhoods of vertices removed forms a compact subset of the larger triangle $abc$. There is a uniform bound for the corresponding Hilbert areas.  The upper bound for Hilbert area is complete.     
\end{proof}
We are ready to present the main result on varying the edge parameters.  
\begin{theorem}\label{main}
	With the above notation for the convex quadrilateral $AaBbCcDd$ and normalization of Figure 2.  The quadrilateral is parameterized by the point $D$ within the triangle $ACa\cap c$.  The Hilbert area of $ABCD$ with respect to $abcd$ approaches infinity as $D$ approaches the boundary of the triangle with the possible exceptions of $D$ approaching $A$ with $Z$ approaching infinity and of $D$ approaching $C$ with $W$ approaching zero.  For $D$ with coordinates $(\mu,\nu)$, the Hilbert area is comparable to 
	$1+(1/Z)\log(-1/\mu)$ in the first instance and is comparable to $1+W\log(-1/\mu)$ in the second instance. The area bounds are uniform for the triple ratio invariants of $ABC$ and $ACD$ in compact subintervals of $(0,\infty)$.
\end{theorem}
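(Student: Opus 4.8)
The plan is to reduce the whole statement to the degeneration analysis of a single shallow-angle vertex (what the introduction calls Proposition 5) together with the triangle bound Proposition \ref{tribound}, and to exploit the reflection symmetry of Proposition \ref{edgepar} to halve the work. First I would dissect the boundary of the moduli triangle $ACa\cap c$ into the pieces identified in the discussion following Proposition \ref{edgepar}: the open edge $a\cap c\,A$ (where $Z\to\infty$), the open edge $a\cap c\,C$ (where $W\to 0$), the open edge $AC$ (where $(W,Z)\to(\infty,0)$ with $WZ$ bounded, so $\nu$ is bounded away from $0$ and $2$), the vertex $a\cap c$ (where $(W,Z)\to(0,\infty)$), and the two exceptional vertices $A$ and $C$. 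On each piece I would produce a larger quadrilateral (or rectangle, via the comparison principle) whose Hilbert area blows up, or invoke the exceptional asymptotics. Because the reflection in $y=1$ carries the $C$-picture to the $A$-picture (interchanging $W\leftrightarrow 1/Z$, replacing $\nu$ by $2-\nu$, $m$ by $-m$, $Y$ by $1/Y$), it suffices to treat $D\to A$, $D\to a\cap c$, $D\to AC$, and the edge $a\cap c\,A$, and to transcribe.

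The core case is $D\to A$. Here $D$ and the vertices $B$ of the inner quadrilateral approach the outer vertex $A$ along the side $AD$, while — as computed after Proposition \ref{edgepar} — the limiting slope of the flag line $d$ is $(1-s)Y-s$ with $s=1+2/Z\in(1,\infty)$ the slope of $AD$. When $s>1$ a genuine corner of the outer polygon $abcd$ forms at (the limit of) $A$, so the inner curve runs into a bona fide vertex and the Hilbert area of the corner region grows like $\log(\text{reciprocal of the distance to the vertex})$, i.e. like $\log(-1/\mu)$; this is exactly the divergence asserted. When instead $s\to 1$ (equivalently $Z\to\infty$) the angle at the outer vertex straightens toward $\pi$, the corner disappears, and one is in the microlocal regime: the competition is between the rate $-1/\mu$ at which $D$ approaches $A$ (governed by $1/W$, since $\mathrm{dist}(A,D)\approx c'/W$) and the rate $s-1\approx 2/Z$ at which the angle straightens. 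A rectangle of width $\sim 2/Z$ (set by the angle defect) and appropriate length inscribed near the corner, together with the one-dimensional comparison $\tfrac12\,dx/x\le L\,dx/(x(2L-x))$ displayed before Proposition \ref{tribound}, yields a lower bound of the shape $c\,(1/Z)\log(-1/\mu)$; the same rectangle enlarged gives the matching upper bound. Thus the area is comparable to $1+(1/Z)\log(-1/\mu)$, and it stays bounded precisely when $-1/\mu$ grows only polynomially in $Z$, i.e. when $\mu\to 0$ no faster than a power of $1/Z\to 0$ — which is the stated exception. By reflection the case $D\to C$ gives $1+W\log(-1/\mu)$.

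For the remaining boundary strata the estimates are softer. On the open edge $AC$ the point $D$ stays at bounded distance from $A$ and $C$ but collides with the segment $AC$; the inner quadrilateral then degenerates with three of its vertices nearly collinear, forcing a thin triangle whose Hilbert area diverges by the lower-bound half of Proposition \ref{tribound} (the relevant triple ratio $\to\infty$). Near the vertex $a\cap c$ one has $D\to a\cap c$ with both flag lines $a$ and $c$ meeting there, so the configuration acquires a corner at $a\cap c$ and a vertex of the inner polygon limits onto it; again a corner sector bound of size $\sim\log(\text{reciprocal distance})$ diverges, uniformly because $W\to 0$ and $Z\to\infty$ keep $D$ genuinely interior to a fixed cone. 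The open edges $a\cap c\,A$ and $a\cap c\,C$ are handled as limiting cases of the corner analysis with $s$ bounded in $(1,\infty)$, so the corner never degenerates and the logarithmic divergence persists. Throughout, the uniformity over triple ratios $Y$ of $ACD$ and $T$ of $ABC$ in a fixed compact subinterval of $(0,\infty)$ is what pins down the implied constants: it bounds the slopes of all four inner sides away from the slopes of the four outer sides except at the degenerating vertex, so that away from that vertex Proposition \ref{tribound}'s compactness argument applies verbatim.

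The main obstacle I expect is the sharp two-sided estimate in the microlocal regime of the $D\to A$ case: one must choose the inscribed and circumscribed rectangles so that their Hilbert areas genuinely capture the product of the \emph{two} competing small quantities ($1/Z$ and $\log(-1/\mu)$) rather than just one of them, and must verify that the cross terms coming from the other three outer sides are lower order. This is precisely the content the introduction attributes to Proposition 5, and it is where the delicate choice of rectangle dimensions — width tied to the angle defect $s-1$, length tied to $\log(-1/\mu)$ — has to be made carefully and then fed through the rectangle area bounds (\ref{dV}) and the one-dimensional comparison inequality.
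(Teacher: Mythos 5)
Your overall architecture (stratify the boundary of the moduli triangle, use the reflection of Proposition \ref{edgepar} to reduce $C$ to $A$, isolate the shallow-angle regime $s\to1$ at $A$ as the crux) matches the paper's, but the crux itself is not proved. The two-sided estimate ``area comparable to $1+(1/Z)\log(-1/\mu)$'' is exactly the content of the paper's Proposition \ref{logbhv}, and your sketch --- ``a rectangle of width $\sim 2/Z$ and appropriate length, enlarged for the matching upper bound'' --- never specifies the construction and, as you concede in your final paragraph, leaves the delicate coupling of the two scales to be done. The paper's argument is genuinely different and nontrivial at this point: it applies the degenerating linear map $\mathcal{M}$ (together with a translation by the coalescing corner $a\cap d=(-\rho,\rho)$) that sends the two nearly parallel flags $a$ and $d$ to the coordinate axes, checks that on the relevant neighborhood the image of $abcd$ is squeezed between squares with a vertex at the origin so the area form is comparable to $dx\,dy/xy$, and then evaluates two explicit integrals whose dominant term is $\log(m_2/m_1)\asymp-(1+m)\asymp 1/Z$ times $\log(1/|\mu(1+m)|)$, producing both the upper and lower bound simultaneously. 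Nothing in your proposal substitutes for this; in particular the bounded-area exception needs the upper bound, which a single rectangle cannot give --- you must cover the whole neighborhood of $A$ inside $ABCD$. Note also that your use of the comparison principle is reversed: a rectangle \emph{contained} in $abcd$ gives an \emph{upper} bound for the $abcd$-area (smaller domain, larger metric), while a lower bound requires a domain \emph{containing} $abcd$. Finally, your threshold statement is off: the area stays bounded iff $\log(-1/\mu)=O(Z)$, i.e.\ up to exponential growth of $-1/\mu$ in $Z$, not ``precisely when $-1/\mu$ grows polynomially.''

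Several of the softer strata are also argued by the wrong mechanism. On the open edge $a\cap c\,A$ the secant slope of $AD$ tends to $-1$ (so $s\to1$, $Z\to\infty$), contrary to your claim that $s$ stays bounded in $(1,\infty)$ and ``the corner never degenerates''; the divergence there comes from the side $AD$ of the inner quadrilateral approaching a segment of the flag line $a$ of definite length, which is how the paper treats it. For $D$ approaching the open edge $AC$, your mechanism (a thin triangle with triple ratio tending to infinity, fed into the lower bound of Proposition \ref{tribound}) fails: by the formula of Proposition \ref{edgepar}, as $(\mu,\nu)\to(0,\nu_0)$ with $d$ becoming vertical one gets $Y\to(2-\nu_0)/\nu_0$, a finite positive value, so no triple ratio blows up; the paper's argument is instead that $d$ becomes vertical and the sides $AD$, $CD$ (and the side $AC$ of the fixed triangle $ABC$) approach the boundary line $d$, forcing the area to diverge. (A small slip in the same vein: $B=(1,1)$ is fixed by the normalization; it is $D$ and the outer vertex $a\cap d$, not $B$, that coalesce at $A$.) So while the case plan is sound, the proposal as written neither establishes the sharp microlocal asymptotics that constitute the theorem's main claim nor correctly justifies two of the divergence strata.
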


We find that the limiting area for $D$ approaching $A$ or $C$ depends on microlocal considerations.  For $s=\nu/\mu$ (resp. $s=(2-\nu)/\mu$) the slope of the secant line $AD$ (resp. the secant line $CD$), then the edge parameter satisfies $Z=-2/(s+1)$ (resp. $1/W=-2/(s+1)$).  It will suffice to analyze the setting of $D$ approaching $A$ since the reflection in the line $y=1$ interchanges the two situations. 

\begin{proof}
	We proceed by cases considering the subset of the boundary that the point $D$ approaches. The first case is for the flag lines $a\cup c$ but not the points $A$ or $C$.  In this case an open segment of the line $AD$ (resp. $CD$) approaches an open segment on the flag $a$ (resp. $c$).  This is a sufficient condition for the area to approach infinity.  The next case is for the line segment $AC$ but not the points $A$ or $C$.  In this instance by convexity the flag line $d$ approaches vertical.  The line segments $AD$ and $CD$ approach the line $d$ and the area approaches infinity.  The next case is for $D$ approaching $A$ with the slope of the secant approaching negative infinity.  From Proposition \ref{edgepar} we consider the formula for the triple ratio invariant and proceed by contradiction.  If the slope $m$ of $d$ is bounded with $(\mu,\nu)$ tending to $(0,0)$, then $Y$ is comparable to $2(\nu-\mu m)/(\nu+\mu)(-2)=(m-\nu/\mu)/(1+\nu/\mu)$ which is negative for $\nu/\mu$ large - a contradiction of $Y$ positive. Consequently the flag line $d$ becomes vertical and approaches the line $AC$.  The area approaches infinity.  The next case is for $D$ approaching $A$ with the slope of the secant limiting to a value in the interval $(-\infty,-1)$.  The limiting configuration has the vertex $A$ at the vertex $a\cap d$, a degenerate configuration.  Let $s$ be the limiting value of $\nu/\mu$.  By Proposition \ref{edgepar} the triple ratio invariant is comparable to $2(s-m)/((s+1)(-2-\mu m))$. Since the triple ratio invariant is positive and $\mu$ is negative it follows that $m$ is bounded.   It now follows for the $D$ tends to $A$ limit that $Y=(m-s)/(s+1)$ or equivalently $m=s+(s+1)Y$, an increasing function of $s$.  Since $-\infty<s<-1$ it follows that $m<-1$ and in the limit there is a corner at $a\cap d$.  The limiting configuration has the vertices $D$ and $a\cap d$ at the vertex $A$. A vertex of the triangle $ABC$ coincides with a vertex of the quadrilateral $abcd$. See Figure \ref{fig:quadrilateral}.  Given $\delta>0$, the $abcd$ area forms converge uniformly on $\mathcal{N}=\{(x,y)\mid (x,y)\in ABC,\,\delta<d((x,y),A)<1\}$.  Provided the area of the limiting configuration is infinite in a neighborhood of $A$, it will follow that the area of $ABCD$ approaches infinity as $D$ approaches $A$. (Given $M>0$ large, there is a $\delta>0$ such that the limiting configuration area of $\mathcal{N}$ is greater than $M$.  For nearby configurations the area of $\mathcal{N}$ is then greater than $M$.)  Use a projective transformation to convert the limiting quadrilateral $abcd$ to a square with an inscribed triangle with a vertex at a corner of the square.  The area of the triangle in a neighborhood of the vertex is given by an integral 
	$$
	\int_0^1\int_{y=m_1x}^{y=m_2x}\frac{dy}{y}\frac{dx}{x}
	$$
	with $m_2>m_1$, where we have used the elementary comparison for the Hilbert area form at the corner of a square.   The area is infinite, the desired conclusion.
	
	The final case, the principal case, is for $D$ approaching $A$ with the slope of the secant line limiting to $-1$; the flag $d$ limits to the flag $a$. The analysis is provided in the following. The reader can check that all estimates are uniform for the triple ratio invariants in compact subintervals of $(0,\infty)$.
\end{proof}

\begin{figure}
	\centering
	\includegraphics[width=.9\linewidth]{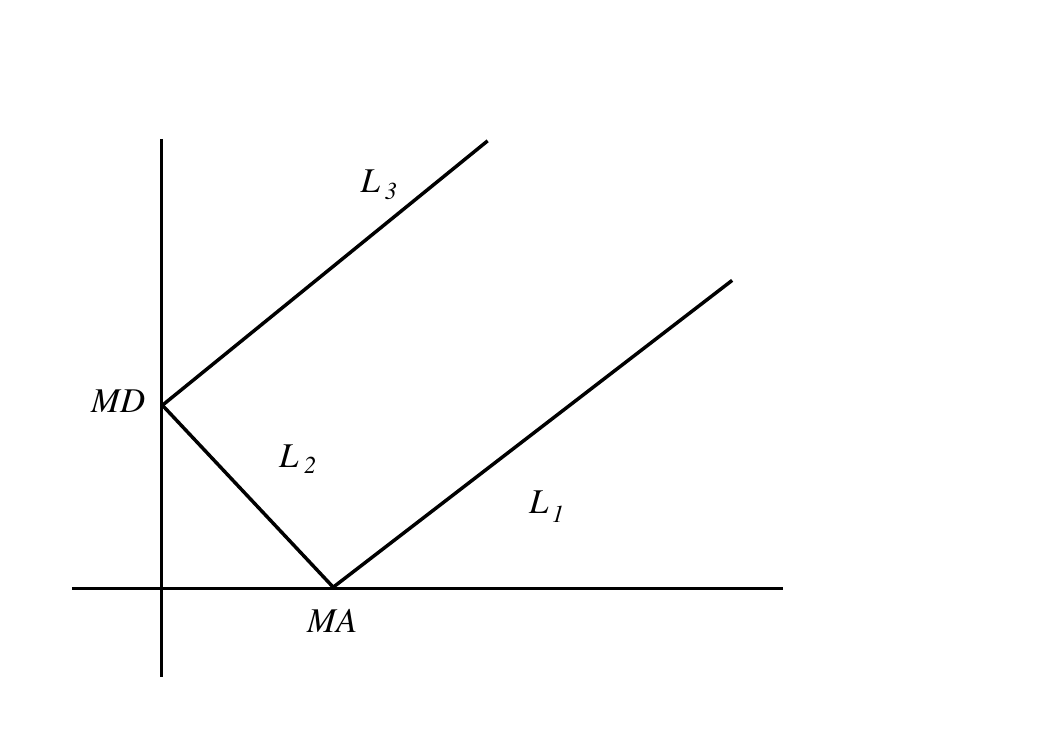}
	\caption[]{Diagram for the proof of Proposition \ref{logbhv}.  The $\mathcal{M}$ transform of a neighborhood of $A$.}
	\label{fig:mad}
\end{figure}

\begin{proposition}\label{logbhv}
	With the above notation.  For $D$ approaching $A$ with the slope of the secant line approaching $-1$ then the Hilbert area of $ABCD$ is comparable to $1+(1/Z)\log(-1/\mu)$.
\end{proposition}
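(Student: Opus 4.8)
The plan is to normalize the configuration so that the vertex $A$, which both $D$ and $B$ (and in the limit $d$) are crowding toward, sits at a corner of a controllable reference region, and then split the Hilbert area of $ABCD$ into a contribution near $A$ and a contribution bounded away from $A$. Away from $A$ the $abcd$ area forms converge uniformly to the limiting (square-with-inscribed-triangle, or square-with-corner) configuration exactly as in the last case of the proof of Theorem \ref{main}, and that contribution is bounded and contributes the ``$1$'' in $1+(1/Z)\log(-1/\mu)$. All the subtlety is therefore in the neighborhood $\mathcal{N}_0=\{(x,y)\in ABCD: d((x,y),A)<\delta\}$ of $A$, and the claim is that its Hilbert area is comparable to $(1/Z)\log(-1/\mu)$ plus a bounded error. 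Recall from the discussion after Proposition \ref{edgepar} that $Z$ parameterizes the secant slope $s=\nu/\mu$ via $Z=2/(s-1)$, so $1/Z$ is comparable to $s-1$, i.e. to the \emph{defect of the secant slope from} $-1$; and $-1/\mu$ is comparable to $1/d(A,D)$, since $|\mu|\asymp F(V,W)\asymp$ the distance $A$ to $D$. So the asserted quantity is $1 + (s-1)\log(1/d(A,D))$, the microlocal quantity advertised in the introduction.

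The key device is the affine map $\mathcal{M}$ (Figure \ref{fig:mad}) that blows up the neighborhood of $A$: since both line $AD$ (slope $s\to-1^+$, i.e. $\omega=(-2/Z)-1\to -1$) and line $a$ (slope $-1$) pass through $A$ and are becoming parallel, I would apply a projective transformation fixing $A$ that spreads the thin sector between $a$ and $AD$ — and between $AB$ and $AD$ — into a sector of definite opening, while sending the opposite edges ($c$, $d$, and the segment of the inner polygon near $B$) to bounded reference lines. After $\mathcal{M}$, the image of $\mathcal{N}_0\cap ABCD$ is comparable to a rectangle-type region to which the Colbois–Vernicos–Verovic bounds (\ref{dV}) and the one-dimensional comparison displayed just before Proposition \ref{tribound} apply. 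Because $\mathcal{M}$ is projective, Hilbert area is preserved, so I reduce to estimating an explicit integral of the form
$$
\int_0^{h}\!\!\int_{y=m_1(x)}^{y=m_2(x)}\frac{dy}{y}\,\frac{dx}{x},
$$
exactly as in the proof of Proposition \ref{tribound}, but now with one of the bounding slopes a function of $x$ whose relevant scale is set by the ratio of the two small parameters $s-1$ and $|\mu|$. The inner $dy/y$ integral produces a logarithm of the ratio of the two boundary lines, and the logarithm of $((1/\tau)-1)x+1$-type term from (\ref{keyintegral}) reappears: integrating it against $dx/x$ from $0$ to a scale comparable to $|\mu|$ while the coefficient is comparable to $s-1$ yields, by the same $Li_2 + \tfrac12(\log\tau)^2$ antiderivative computation used in Proposition \ref{tribound}, a main term comparable to $(s-1)\log(1/|\mu|)$, with the remaining pieces bounded.

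For the matching lower bound I would enclose $ABCD$ in a single larger rectangle adapted to the corner at $A$ — as in the triangle proof, a square of fixed size containing all the relevant configurations after the normalization of Figure 2 — so that $dHilb\ge \tfrac14 d\mathcal{V}\ge c\,dxdy/(xy)$ near $A$ in the $\mathcal{M}$-coordinates, and integrate the same explicit expression from below over the (now slightly smaller, but still comparable) sector cut out by $ABCD$. The upper bound uses, as in Proposition \ref{tribound}, a covering of $\mathcal{N}_0$ by two or three overlapping rectangles inscribed in $abcd$: one along $a$, one along $d$, and one straddling the thin wedge, chosen so that on each piece the inner polygon's boundary slopes are bounded away from the rectangle's. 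The main obstacle I anticipate is precisely \emph{bookkeeping the two independent small parameters} $\epsilon_1 = s-1$ (secant slope defect) and $\epsilon_2=|\mu|$ (distance to $A$) and verifying that it is genuinely the product structure $\epsilon_1\log(1/\epsilon_2)$ that survives — i.e. that when $\epsilon_2$ decays exponentially faster than $\epsilon_1$ the area blows up, while otherwise it stays bounded, with the transition sharp — and in checking that the $\mathcal{M}$-normalization can be chosen \emph{uniformly} as the triple ratio invariants $Y$ (of $ACD$) and $T$ (of $ABC$) range over a compact subinterval of $(0,\infty)$; this uniformity is what lets the secant-line slope near $B$ and the flag line $d$ stay uniformly transverse to $a$ after blow-up, so that the wedge really does open up to definite size.
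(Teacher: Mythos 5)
Your proposal follows essentially the same route as the paper's proof: split the area into a bounded contribution away from $A$ plus a contribution from a neighborhood of the coalescing corner, apply the affine normalization $\mathcal{M}$ so that the two nearly parallel flag lines $a$ and $d$ become the coordinate axes and the Hilbert area element is comparable to $dx\,dy/(xy)$ by the rectangle bounds and the comparison principle, then evaluate an explicit double integral whose main term is (slope defect)$\,\times\log(1/|\mu|)\asymp(1/Z)\log(-1/\mu)$, with uniformity coming from the triple ratios lying in compact subsets of $(0,\infty)$. So the strategy matches; the issues are in two places where your sketch misdescribes the mechanism.

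First, the blow-up is naturally centered at $a\cap d$, not at $A$: the paper translates $a\cap d=(-\rho,\rho)$ to the origin and then applies $\mathcal{M}$, because the degenerating feature is the outer corner between $a$ and $d$ whose interior angle tends to $\pi$; the angle of $ABCD$ at $A$, between $AB$ (slope $1$) and $AD$ (slope $s\to-1$), is not small, so "spreading a thin sector at $A$" is not the right picture. (Relatedly, for $s=\nu/\mu$ the correct relation is $Z=-2/(s+1)$, so $1/Z\asymp-(1+s)$; your $Z=2/(s-1)$ mixes the paper's two sign conventions.) Second, the step you cite for the main term would not produce it: the $Li_2+\tfrac12(\log\tau)^2$ antiderivative computation of Proposition \ref{tribound}, applied over a range of scale $|\mu|$, only yields an $O(1)$ contribution. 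In the paper the divergent term comes from the long logarithmic range: after substituting $x=\mu(1+m)v$, the logarithm splits as $\log(m_2/m_1)$ plus a term that is $O(1/v)$, and since $\log(m_2/m_1)\asymp-(1+m)$ with the identity $1+m=(1+s)(1+\tilde Y)$ tying the slope of $d$ to the secant slope through the triple ratio, integrating $dv/v$ from $O(1)$ up to $c_*/|\mu(1+m)|$ gives the term comparable to $(1/Z)\log(1/|\mu|)$. That identity is also exactly what resolves the "two independent small parameters" worry you flag and what makes the estimate uniform in $Y$; it must be established (it follows by dividing the triple ratio relation by $\mu$), not just hoped for.
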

\begin{proof}
	First we bound the area of $ABCD$ in the complement of a neighborhood of $A$.  For a neighborhood of $B$ (resp. $C$) we choose a square contained in $abcd$ with $B$ (resp. $C$) interior to an edge.  The sector bound from Proposition \ref{tribound} provides a bound for the area of neighborhoods of $B$ and $C$.   We show below for the slope of the secant line tending to $-1$ that $m$, the slope of $d$, also tends to $-1$.  It follows that the $ABCD$ complement of neighborhoods of $A, B$ and $C$ is a compact set separated from $abcd$.  The area of the complement is uniformly bounded.   
	
	We begin with formulas for the calculations. The setting is that $\mu,\nu$ and $1+s$ are tending to zero.  Write the triple ratio invariant formula as 
	$$
	(\mu+\nu)\tilde Y\,=\,\mu m-\nu \quad \mbox{where}\quad\tilde Y\,=\,Y\,\frac{(2-\nu+\mu m)}{(2-\nu+\mu)}.
	$$
Divide the triangle formula by $\mu$ and write $s=\nu/\mu$ for the slope of the $AD$ secant to find $(1+s)\tilde Y=m-s$ or equivalently $m=s+(1+s)\tilde Y$ or equivalently  $1+m=(1+s)(1+\tilde Y)$.  Our calculations involve the two quantities 
$$
\sigma\,=\,\frac{\mu+\nu}{1+m}\,=\,\mu\,\frac{1+s}{1+m}\,=\,\frac{\mu}{1+\tilde Y}
$$
and
$$ 
\rho\,=\,\frac{\nu-\mu m}{1+m}\,=\,\mu\,\frac{s-m}{1+m}\,=\,\frac{-\mu\tilde Y}{1+\tilde Y}.
$$

The first matter is the coordinates for the flag intersection point $a\cap d$. The $a$ flag has equation $y=-x$ and the $d$ flag has equation $y-\nu=m(x-\mu)$.  The abscissa equation is $-x-\nu=m(x-\mu)$.  It follows that the $a\cap d$ coordinates are $(-\rho,\rho)$ and that the intersection point tends to $A$. The main consideration is a change of coordinates to transform the flags $a$ and $d$ to the horizontal and vertical axes.   The Hilbert area element comparison element will be $dxdy/xy$.  The affine change of coordinates is translation by $(\rho,-\rho)$ and multiplication by 
$$
\mathcal{M}\,=\,\left(\begin{matrix} 
-m & 1
\\1 & 1
\end{matrix}
\right).
$$ 
The translation of $D$ is $\sigma(1,m)$ and the translation of $A$ is $\rho(1,-1)$, then we have
$$
\mathcal{M}D\,=\,\sigma(1+m)(0,1)^{\top}\quad\mbox{and}\quad\mathcal{M}A\,=\,-\rho(1+m)(1,0)^{\top}.
$$ 
We now consider $DC$ and $AB$ as giving displacements rather than locations.  In particular consider $DC$ and $AB$ as tangent vectors which transform by $\mathcal{M}$, giving
\begin{align*}
&\mathcal{M}DC\,=\,\mathcal{M}\left(\begin{matrix} -\mu\\2-\nu\end{matrix}\right)\,=\,\left(\begin{matrix} 2 +\mu m-\nu\\2-\nu-\mu\end{matrix}\right)\quad\mbox{and}\\
&\mathcal{M}AB\,=\,\mathcal{M}\left(\begin{matrix} 1\\1\end{matrix}\right)\,=\,\left(\begin{matrix}  1-m\\2\end{matrix}\right).
\end{align*}
The slopes of the displacements satisfy:
$\mathcal{M}AB$ has slope $m_1=2/(1-m)$ and  $\mathcal{M}DC$ has slope $m_2=(2-\nu-\mu)/(2+\mu m-\nu)$.  It now follows that the transform $L_1$ of the line from $A$ to $B$ has equation $y=m_1(x+\rho(1+m))$; the transform $L_2$ of the line from $A$ to $D$ has equation $y=(\sigma/\rho) x+\sigma(1+m)$; the transform $L_3$ of the line from $D$ to $C$ has equation $y=m_2 x+\sigma(1+m)$. See Figure \ref{fig:mad}. The lines provide three sides of $ABCD$. Neighborhoods $\mathcal{U}$ of $A$ in $ABCD$ are defined by  horizontal lines $\mathcal{L}$.  Viewed from $A$, the intersection of a line $\mathcal{L}$ with the triangle $ABC$ sweeps out an angle interval $[\pi/4,\pi/2]$. For $D$ very close to $A$, viewed from $a\cap d$ the intersection of $\mathcal{L}$ with $ABCD$ sweeps out an angle interval very close to $[\pi/4,\pi/2]$.  For vectors in this angle interval the length of a vector and its product with $\mathcal{M}$ have uniformly comparable length.  In consequence the transform of a neighborhood $\mathcal{U}$ is a bounded neighborhood of the origin in the image.  Since the flags $a$ and $d$ transform to the horizontal and vertical, the transformed quadrilateral $\mathcal{M}abcd$ contains the intersection of a neighborhood of the origin and the first quadrant, hence contains a square with vertex at the origin. See Figure 3.  By convexity the transformed quadrilateral is also contained in a square with vertex at the origin. By the one-dimensional comparison, it now follows that the Hilbert area element on $\mathcal{MU}$ is uniformly comparable to $dxdy/xy$.

Our considerations give that the Hilbert area of a neighborhood of $A$ in $ABCD$ is the sum of the two integrals (\ref{firstint}) and (\ref{secondint}).   The first integral is
\begin{equation}\label{firstint}
\int_0^{-\rho(1+m)}\int_{(\sigma/\rho) x+\sigma(1+m)}^{m_2x+\sigma(1+m)}\frac{dy}{y}\frac{dx}{x}\,=\,\int_0^{-\rho(1+m)}\frac{1}{x}\log\frac{m_2x+\sigma(1+m)}{(\sigma/\rho) x+\sigma(1+m)\,}dx,
\end{equation} 
where the logarithm vanishes to first order for $x=0$.  We substitute $x=\mu(1+m)v$ and factor $\mu(1+m)/(1+\tilde Y)$ out of the numerator and denominator for the logarithm.   The result is
$$
\int_0^{\tilde Y/(1+\tilde Y)}\frac{1}{v}\log\frac{m_2(1+\tilde Y)v+1}{-v(1+\tilde Y)/\tilde Y + 1}\,dv,
$$
a convergent integral, and since $\tilde Y$ tends to $Y$ and $m_2$ tends to $2$ as $(\mu,\nu)$ tends to $(0,0)$, the overall  contribution of the integral is comparable to unity. The second integral corresponds to the region bounded by the vertical line $x=-\rho(1+m)$, the lines $L_1$ and $L_3$, and a vertical line $x=c_*$ defining the neighborhood $\mathcal{U}$ of $A$.  The second integral is
\begin{equation}\label{secondint} 
\int_{-\rho(1+m)}^{c_*}\frac{1}{x}\log\frac{m_2x+\sigma(1+m)}{m_1(x+\rho(1+m))}\,dx.
\end{equation}     
Substitute $x=\mu(1+m)v$ to find the integral 
$$
\int_{\tilde Y/(1+\tilde Y)}^{c_*/(\mu(1+m))}\frac{1}{v}\log\frac{m_2(1+\tilde Y)v+1}{m_1((1+\tilde Y)v-\tilde Y)}\,dv
$$
after multiplying the logarithm numerator and denominator by the factor $(1+\tilde Y)$.  We rewrite the logarithm as
$$
\log\frac{m_2}{m_1}\,+\,\log\frac{(1+\tilde Y)v+1/m_2}{(1+\tilde Y)v-\tilde Y}.
$$
The second logarithm is bounded as $O(1/v)$ for $v$ large.  This term multiples the factor $1/v$ and so contributes $1/v^2$ to the integrand - a quantity integrable on $(1,\infty)$ - the overall contribution to the integral is comparable to unity independent of $\mu(1+m)$.  The final calculation is for $\log(m_2/m_1)$; the one-term expansion suffices. We have that 
\begin{multline*}
\frac{m_2}{m_1}\,=\,\frac{2-\nu-\mu}{2+\mu m-\nu}\frac{1-m}{2}\,=\,\frac{2-\mu(1+s)}{2+\mu(m-s)}\frac{2-(1+m)}{2}\\ =1-\frac12\mu(1+s)-\frac12\mu(m-s)-\frac12(1+m)+\dots\,=\,1-\frac12(1+\mu)(1+m)+\dots
\end{multline*}
and that $\log(m_2/m_1)$ is comparable to $-(1+m)$.  The resulting contribution to the integral is $-(1+m)\log(1/(\mu(1+m))$ with $-(1+m)$ comparable to $-(1+s)$ which by Proposition \ref{edgepar} is comparable to $1/Z$.  The expression is simplified upon noting that $v\log 1/v$ is bounded for $v$ small.  The proof is complete.  
\end{proof}

\section{Discussion.}

We discuss the applications considerations of Theorem \ref{main}.  First we present the central quantity in terms of Fock-Goncharov parameters and Cartesian coordinates.  From Proposition \ref{edgepar}, we have equivalent expressions 
\begin{multline*}
Q\,=\,\frac{1}{Z}\log\frac{-1}{\mu}\,=\,\frac{1}{Z}\log(1+W+1/Z)\\ =\,\frac{\mu+\nu}{-2\mu}\log\frac{-1}{\mu}\,=\,\frac{-(1+s)}{2}\log\frac{-1}{\mu},
\end{multline*}
for $(\mu,\nu),\,-\mu<\nu$, the Cartesian coordinates for the vertex $D$ and $s=\nu/\mu$ the slope of the secant line.  Since in application $Z$ is tending to infinity, the second expression is comparable to $(1/Z)\log(1+W)$.   We see that Hilbert area tending to infinity corresponds to $W$ tending to infinity exponentially faster than $Z$ tending to infinity or $\mu$ tending to zero exponentially faster than $1+s$ tending to zero.   
We now consider that $D$ is a point on a graph and study the property of the graph corresponding to a given Hilbert limiting area.  To study a graph $y=f(x)$ in the first quadrant we reflect about the vertical axis and set $x=-\mu$ and $y=\nu$.  The central quantity is now
$$
Q_f(x)\,=\,\frac{f(x)-x}{x}\log\frac{1}{x}.
$$
The context for Theorem \ref{main} provides that $f(0)=0$, $f'(0)=1$ and $f(x)\ge x$.  In particular the slope of the secant line (before reflection) is $-1-(2/Z)$ with $Z$ tending to infinity. We find that the limiting Hilbert area is described by a rescaling of the graph - a second blow up. The following is immediate.  

\begin{proposition}\label{arealimit}
	Notation as above. For functions $y=f(x)$ with $f(0)=0$, $f'(0)=1$ and $f(x)\ge x$, then $Q_f$ is increasing in $f$.  For $g(x),\,x>0,$ a non negative function with limit at zero and $f(x)=x(1+(g(x)/\log(1/x)))$ then $
	\lim_{x\rightarrow 0}Q_f(x)\,=\,\lim_{x\rightarrow 0}g(x).
	$
\end{proposition}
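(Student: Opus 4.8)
The plan is to reduce both assertions to the explicit algebraic form $Q_f(x)=\frac{f(x)-x}{x}\log\frac1x$. First I would record that for $0<x<1$ one has $\log(1/x)>0$ and $x>0$, so for each such fixed $x$ the assignment $f(x)\mapsto Q_f(x)$ is an increasing (indeed affine) function of the single value $f(x)$. Consequently, if $f_1,f_2$ are admissible graphs with $f_1(x)\le f_2(x)$ for all small $x$, then $Q_{f_1}(x)\le Q_{f_2}(x)$ for all small $x$; this is the monotonicity claim. The standing hypothesis $f(x)\ge x$ is exactly the statement $Q_f\ge 0$, which is consistent with the reading of $Q_f$ as the Hilbert-area quantity from Theorem~\ref{main}.

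For the second assertion I would begin by checking that $f(x)=x\bigl(1+g(x)/\log(1/x)\bigr)$ is a legitimate competitor, i.e. that it meets the hypotheses $f(0)=0$, $f'(0)=1$, $f(x)\ge x$. Since $g$ is non-negative and has a finite limit at $0$, it is bounded near $0$, while $\log(1/x)\to\infty$ as $x\to 0^+$; hence $g(x)/\log(1/x)\to 0$, which yields both $f(x)\to 0$ and $f'(0)=\lim_{x\to 0}f(x)/x=1$. The inequality $f(x)\ge x$ is equivalent to $g(x)\ge 0$, which holds by assumption. Then the computation is immediate: $f(x)-x=x\,g(x)/\log(1/x)$, so $\frac{f(x)-x}{x}=g(x)/\log(1/x)$ and therefore
$$
Q_f(x)\,=\,\frac{f(x)-x}{x}\log\frac1x\,=\,g(x).
$$
In particular $Q_f\equiv g$ for small $x>0$, so $\lim_{x\to 0}Q_f(x)=\lim_{x\to 0}g(x)$, as claimed.

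There is essentially no obstacle here; the statement is "immediate" in the sense already asserted in the text, and the only point requiring a word of care is confirming that the prescribed $f$ lies in the admissible class, after which the conclusion is a one-line cancellation of the factor $\log(1/x)$. If desired, one can append the interpretive remark that this identity exhibits $Q_f$ as reading off exactly the second-order correction $g$ in the expansion $f(x)=x+x\,g(x)/\log(1/x)$ of the graph at the blown-up vertex, which is the normalization used in the surrounding discussion.
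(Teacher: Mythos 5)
Your proposal is correct and matches the paper's intent exactly: the paper offers no written proof (it declares the proposition ``immediate''), and your argument—pointwise monotonicity of $Q_f$ in the value $f(x)$ because $x>0$ and $\log(1/x)>0$ for small $x$, followed by the cancellation $Q_f(x)=g(x)$ after checking admissibility of the prescribed $f$—is precisely the computation being taken for granted. Nothing is missing.
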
 

The analysis of Proposition \ref{logbhv} is local at the coalescing vertices and applies equally for convergence of adjacent flags of an inscribed $n$-gon as follows.  Observe that the configuration is characterized by the convergence of the adjacent flags $d$ and $a$.  See Figure \ref{fig:quadrilateral}.  That the polygon is inscribed implies that the slope of the secant line approaches $-1$. The ensuing analysis applies for a pair of converging adjacent flags of an $n$-gon.    

In the setting of a discrete subgroup of \pg\  acting properly on a strictly convex domain in \rp\  the boundary is $C^{1+\epsilon}$, \cite{BenCdI}.  A $C^{1+\epsilon}$ function normalized as above has an expansion $f(x)=x+O(x^{1+\epsilon})$. From the proposition we have that $\lim_{x\rightarrow 0}Q_f(x)=0$. For a vertex tracing a  $C^{1+\epsilon}$ curve, the Hilbert area of a quadrilateral inscribed in a quadrilateral is a bounded function. 

Goldman introduced the \emph{twisting} and \emph{bulging} deformations for convex projective structures as generalizations of the Fenchel-Nielsen twist deformation for hyperbolic structures, \cite{Gdprj}. Bonahon-Kim studied the relation between the Goldman parameters and the Fock-Goncharov parameters.  We now use  Theorem \ref{main} to study Kim's postulation that bounding Hilbert area bounds bulging parameters.  

We begin with the effect of twisting and bulging on the Fock-Goncharov parameters.  Bonahon-Kim present the formulas in terms of logarithms of Fock-Goncharov parameters, \cite[Lemma 5.1]{BoKi}.  We state the formulas directly.  The $u$-twist variation of $(Z,W)$ is given as $(e^uZ,e^uW)$ and the $v$-bulge variation is given as $(e^{-3v}Z,e^{3v}W)$.   The matrix $\begin{pmatrix}
	1 & -3 \\ 1 & 3
\end{pmatrix}$
of exponent coefficients is invertible - fixing a reference point $(Z_0,W_0)$, the twist-bulge parameters $(u,v)$  parameterize attaching triangles. The twist (resp. bulge) deformation fixes the ratio $Z/W$ (resp. the product $ZW$) and varies the product (resp. the ratio).  We consider that the bulge deformation diverges - the ratio tends to infinity or zero.  No restriction is placed on the twist deformation; the values $Z,W$ are only restricted by the behavior of their ratio.  We introduce notation to catalog the possible cases.  We write respectively $0,+$ or $\infty$, if a quantity limits respectively to zero, a positive number or to infinity.   We begin with the condition that $Z/W$ limits to infinity.   In lexicographic order, the five possible cases for $(Z,W)$ are $(0,0), (+,0), (\infty,0), (\infty, +)$ and $(\infty,\infty)$.  For all cases we have that $Z\gg W$.  We refer to Proposition \ref{edgepar} for the equations of the flag lines with parameters $Z$ and $W$; we refer to Figure 2 for the diagram.  The behaviors for the vertex $D$ are as follows: for the case $(0,0)$, $D$ limits to $C$; for the cases $(+,0),(\infty,0)$ and $(\infty,+)$, $D$ limits to $a\cup c-A-C$; for the case $(\infty,\infty)$, $D$ limits to $A$. In the first case the Hilbert area may or may not be bounded depending whether $Z$ is exponentially larger than $W$.  For the combination of the second, third, and fourth cases, the Hilbert area tends to infinity and the possible limits fill out $a\cup c-A-C$ since the values $Z,W$ are only constrained by the condition that the ratio tends to infinity.  In the fifth case $(\infty,\infty)$ the central quantity $Q$ is bounded since $Z\gg W$.  We are ready to consider that $Z/W$ limits to zero.  The five possible cases for $(Z,W)$ are $(0,0),(0,+),(0,\infty),(+,\infty)$ and $(\infty,\infty)$.  For all cases $Z\ll W$. The behaviors for $D$ are as follows: for the case $(0,0)$, $D$ limits to $C$; for the cases $(0,+),(0,\infty)$ and $(+,\infty)$, $D$ limits to $AC-A-C$; for the case $(\infty,\infty)$, $D$ limits to $A$. In the first and fifth cases the Hilbert area may or may not be bounded depending whether $Z$ is exponentially smaller than $W$. For the second, third and fourth cases the Hilbert area tends to infinity and the possible limits fill out $AC-A-C$ since the values $Z,W$ are only constrained by the condition that the ratio tends to zero.  Overall we have that $(Z,W)$ can tend to $(0,0)$ or $(\infty,\infty)$ with $D$ tending to $A$ or $C$ with Hilbert area bounded; in all other cases Hilbert area is unbounded.  In particular for $(Z,W)$ tending to $(0,0)$ or $(\infty,\infty)$ with $Z/W$ tending to $0$ or $\infty$ and $W$ bounded between $Z^{1-\epsilon},Z^{1+\epsilon}$, then the Hilbert area of quadrilaterals inscribed in quadrilaterals is bounded. 

\begin{proposition} Notation as above for inscribed  quadrilaterals.  Bounds for the $abcd$ Hilbert area of $ABCD$ do not provide bounds for the bulging parameters. 	
\end{proposition}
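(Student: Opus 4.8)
The plan is to produce an explicit sequence of inscribed quadrilaterals along which the bulging parameter diverges while the $abcd$ Hilbert area of $ABCD$ stays bounded; the existence of such a sequence is exactly the negation of a bound. Fix a reference configuration with Fock-Goncharov edge parameters $(Z_0,W_0)$, $W_0,Z_0>0$, and with triple ratio invariants $T_0$ (of $AaBbCc$) and $Y_0$ (of $ACD$). Recall that the $u$-twist followed by the $v$-bulge deformation carries $(Z_0,W_0)$ to $(e^{u-3v}Z_0,\,e^{u+3v}W_0)$ and leaves the two triple ratio invariants unchanged; equivalently the bulge is governed by the change of the ratio, $e^{6v}=(W/Z)/(W_0/Z_0)$, and the twist by the change of the product $ZW$.

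First I would take $Z_n=n$ and $W_n=n\log n$ for $n\ge 2$. Then $W_n,Z_n>0$, so $(Z_n,W_n)$ is the edge-parameter pair of a genuine inscribed quadrilateral $A_nB_nC_nD_n$ normalized as in Figure 2, with triple ratio invariants the fixed values $T_0,Y_0$. Solving the deformation equations gives $6v_n=\log\!\big((Z_0/W_0)\log n\big)$ and $2u_n=\log\!\big(n^2\log n/(Z_0W_0)\big)$; in particular $v_n\to\infty$, so the bulging parameter is unbounded along the sequence. (The twist $u_n$ also diverges, but no constraint is imposed on it.)

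Next I would verify that the Hilbert area remains bounded. Since $W_n\to\infty$, Proposition \ref{edgepar} gives $\mu_n=-1/(W_n+1/Z_n+1)\to 0$ and $\nu_n=(1+2/Z_n)/(W_n+1/Z_n+1)\to 0$, so $D_n$ tends to $A$; moreover the secant slope $s_n=\nu_n/\mu_n=-(1+2/Z_n)$ tends to $-1$, which is the principal case treated in Theorem \ref{main} and Proposition \ref{logbhv}. Because the triple ratio invariants are fixed at $T_0,Y_0$, hence lie in a compact subinterval of $(0,\infty)$, the uniform estimate applies: $\mathrm{Vol}_{a_nb_nc_nd_n}(A_nB_nC_nD_n)$ is comparable to $1+(1/Z_n)\log(-1/\mu_n)=1+Q_n$ with $Q_n=(1/Z_n)\log(1+W_n+1/Z_n)$. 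Here $Q_n=\tfrac1n\log(1+n\log n+1/n)\to 0$, so the Hilbert areas are uniformly bounded. Comparing this with the previous paragraph proves the proposition.

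The content is carried entirely by Theorem \ref{main} and the microlocal quantity $Q$, so no real obstacle remains; the two points requiring care are confirming that twisting and bulging fix both triple ratio invariants — so that the uniformity clause of Theorem \ref{main} is in force along the whole sequence — and choosing the growth of $W_n$ in the sweet spot: fast enough that $Z_n/W_n\to 0$, which forces $v_n\to\infty$, but slow enough (any $W_n\le Z_n^{1+\epsilon}$ with $\epsilon>0$ works) that $Q_n\to 0$.
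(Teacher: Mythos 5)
Your proposal is correct and follows essentially the same route as the paper: the paper's proof is the preceding discussion, which uses the Bonahon--Kim twist-bulge action on $(Z,W)$ together with the microlocal quantity $Q=(1/Z)\log(1+W+1/Z)$ from Theorem \ref{main} to exhibit bulge-divergent families with $W$ between $Z^{1-\epsilon}$ and $Z^{1+\epsilon}$ (so $W/Z\to\infty$ while $Q\to 0$), and your sequence $Z_n=n$, $W_n=n\log n$ with fixed triple ratios is just a concrete instance of that same mechanism, with the uniformity clause of Theorem \ref{main} correctly invoked.
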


The observation has a consequence for strictly convex domains.   Given an $n$-gon inscribed in an $n$-gon, the outer $n$-gon can be \emph{smoothed} to a strictly convex domain, exponentially close to the original $n$-gon in neighborhoods of the inscribed vertices.  The Hilbert area varies continuously under such smoothing and the edge parameters are unchanged. Start with an inscribed quadrilateral and consider a sequence of deformations with bulging parameters diverging and with uniformly bounded Hilbert areas.  Smooth the outer quadrilaterals, keeping the Hilbert areas uniformly bounded.  The result is a sequence of strictly convex domains with inscribed quadrilaterals with uniformly bounded Hilbert areas and diverging bulging parameters.  The domains are not specified as bulgings of a single domain. 

\begin{proposition}
	There are sequences of strictly convex smooth domains with inscribed quadrilaterals with uniformly bounded Hilbert areas and diverging bulging parameters. 
\end{proposition}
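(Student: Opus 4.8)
The plan is to assemble the sequence by combining three ingredients already developed: the bulging formulas, the microlocal area estimate of Theorem \ref{main} and Proposition \ref{logbhv}, and a smoothing procedure for the outer polygon. First I would fix a reference inscribed quadrilateral $AaBbCcDd$ in the normalization of Figure 2, with $(Z_0,W_0)$ its edge parameters and triple ratio invariants $T_0,Y_0$. Applying the $v$-bulge deformation sends $(Z,W)$ to $(e^{-3v}Z_0,e^{3v}W_0)$, so the ratio $Z/W=e^{-6v}(Z_0/W_0)$ diverges while the product $ZW=Z_0W_0$ is fixed. Letting $v\to-\infty$ we are in the regime $Z\gg W$ with $ZW$ constant, hence $Z\to\infty$ and $W\to 0$; by the analysis preceding the penultimate proposition, $D$ then approaches $A$ and the central quantity is $Q=(1/Z)\log(-1/\mu)$. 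Because $W\to 0$ while $Z\to\infty$ with $ZW$ bounded, we have $W\le Z^{1-\epsilon}$ eventually, and since $-1/\mu=1+W+1/Z$ grows only polynomially in $Z$, the quantity $(1/Z)\log(-1/\mu)\to 0$. By Proposition \ref{logbhv} the Hilbert area of $ABCD$ with respect to $abcd$ is comparable to $1+Q$, hence stays uniformly bounded along this sequence of bulge deformations. One must also check, via the uniformity clause in Theorem \ref{main}, that $T_0$ is unchanged under bulging and that $Y$ stays in a compact subinterval of $(0,\infty)$ along the deformation; this follows because the bulge deformation fixes the $ABC$ configuration and moves $D$ along a controlled path, and $Y$ is continuous with a finite nonzero limit as $D\to A$ with secant slope $\to -1$.

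Next I would invoke the smoothing remark from the discussion: the outer quadrilateral $abcd$ can be replaced by a strictly convex $C^\infty$ domain agreeing with $abcd$ outside exponentially small neighborhoods of the four inscribed vertices, in particular agreeing near $A$, $B$, $C$, $D$ to within an error far smaller than the relevant scales $-\mu$, $1+m$, etc. Since the Hilbert area element on a rectangle depends continuously (indeed, with explicit comparison constants from (\ref{dV})) on the domain, and since the area estimates in Proposition \ref{logbhv} were obtained via covering by rectangles whose geometry persists under exponentially small perturbation of the boundary, the smoothed domains $\Omega_n$ carry an inscribed quadrilateral $A_nB_nC_nD_n$ whose Hilbert area differs from the polygonal value by a uniformly bounded amount — hence stays uniformly bounded. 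The edge parameters, being cross ratios of the lines $a$, $AB$, $AC$, $AD$ which are determined by the inscribed vertices and the tangent flags, are unchanged by the smoothing; in particular the bulging parameters $v_n\to-\infty$ are unaffected. Taking the sequence $v_n\to-\infty$ and the corresponding smoothed $\Omega_n$ produces strictly convex smooth domains with inscribed quadrilaterals, uniformly bounded Hilbert areas, and diverging bulging parameters.

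I would present the argument in three short paragraphs mirroring these steps: (i) the bulge deformation of the reference quadrilateral and the verification that $Q\to 0$, citing the bulge formula and Proposition \ref{logbhv}; (ii) the uniformity of the area bound in the triple ratio invariants, citing Theorem \ref{main}; (iii) the smoothing step with its continuity of Hilbert area and invariance of edge parameters, citing \cite{BenCdI} for the $C^{1+\epsilon}$ context only as motivation (the construction itself only needs a $C^\infty$ strictly convex smoothing). The main obstacle, and the place where care is needed, is step (iii): one must make precise that the rectangle coverings underlying the proof of Proposition \ref{logbhv} survive an exponentially small boundary perturbation with comparison constants that do not blow up, i.e. that the squares with a vertex at the image of the origin in the $\mathcal{M}$-transformed picture can be chosen with definite size relative to the perturbation scale. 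This is plausible precisely because the smoothing is exponentially small while the geometric scales in the estimate ($-\mu$, $1+m$, $1/Z$) are only polynomially small, so the perturbation is negligible at every relevant scale; but spelling this out is the substantive part of the proof, the rest being bookkeeping with the already-established formulas.
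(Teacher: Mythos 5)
There is a genuine gap, and it sits at the very first step: the choice of the degenerating sequence. A pure bulge deformation fixes the product $ZW$, so with $v\to-\infty$ you get $Z=e^{-3v}Z_0\to\infty$ and $W=e^{3v}W_0\to 0$. By Proposition \ref{edgepar}, $\mu=-1/(1+W+1/Z)\to -1$ and $\nu=(1+2/Z)/(1+W+1/Z)\to 1$, so $D$ tends to $a\cap c=(-1,1)$, \emph{not} to $A$. Your assertion that ``$D$ then approaches $A$'' contradicts the paper's own case catalogue (the case $(Z,W)\to(\infty,0)$ sends $D$ to $a\cup c-A-C$; $D\to A$ requires $W\to\infty$). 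Consequently Proposition \ref{logbhv} is not applicable: it is a statement about $D\to A$ with secant slope tending to $-1$, i.e.\ $\mu\to 0$, whereas in your regime $-1/\mu\to 1$ and the quantity $Q=(1/Z)\log(-1/\mu)$ is not the relevant invariant. Worse, in your regime the segment $AD$ converges to a segment of the flag line $a$ (since $D\to a\cap c$ and the slope of $AD$ tends to $-1$), which is exactly the first case in the proof of Theorem \ref{main}: the Hilbert area tends to \emph{infinity}. So the sequence you construct has unbounded area, the opposite of what is required.

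The paper's construction avoids this by \emph{not} using a pure bulge of a fixed quadrilateral: it lets the twist diverge simultaneously, so that $(Z,W)\to(\infty,\infty)$ (or $(0,0)$) with $W$ pinched between $Z^{1-\epsilon}$ and $Z^{1+\epsilon}$. Then $D\to A$ (or $C$), the central quantity $Q=(1/Z)\log(1+W+1/Z)\approx (1/Z)\log W\to 0$, so by Theorem \ref{main}/Proposition \ref{logbhv} the areas stay bounded, while $Z/W\to\infty$ (or $0$), i.e.\ the bulging parameter diverges; the twist parameter diverges as well, which is precisely why the resulting domains are not bulgings of a single structure and why Kim's corrected statement assumes bounded twist. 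If you replace your step (i) by this choice (e.g.\ $W_n=Z_n^{1-\epsilon}$, $Z_n\to\infty$), the rest of your outline is sound: your step (ii) on uniformity in the triple ratios matches the paper's uniformity clause, and your step (iii), smoothing the outer quadrilateral exponentially close to it near the inscribed vertices with continuity of Hilbert area and invariance of the edge (hence twist--bulge) parameters, is exactly the paper's concluding argument; your remark that the persistence of the rectangle coverings under an exponentially small boundary perturbation deserves justification is fair, since the paper asserts this continuity without detailed proof.
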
 

We now compare to the considerations of Proposition 4.2 in \cite{Kideg} and Proposition 3.5 in \cite{SunZ}. In the proofs of the propositions it is presented (with the present notation) that for $Z/W$ tending to infinity, $D$ can limit only to $a\cap c$ and for $Z/W$ tending to zero, $D$ can limit to any point of $AC$.  We find for $Z/W$ diverging that the point $D$ can always limit to $A$ and $C$. Our analysis of Theorem \ref{main}, Proposition \ref{logbhv} and the  above sequence example indicates that the limit behavior of the Hilbert area for $D$ approaching $A$ or $C$ is at least a delicate matter.  Kim has communicated to the author that he has updated his result to: provided that the triple ratio and twist parameters are bounded, then the Hilbert area bounds bulging parameters.                  

%\bibliographystyle{alpha}
%\bibliography{scottbib}

\providecommand\WlName[1]{#1}\providecommand\WpName[1]{#1}\providecommand\Wl{Wlf}\providecommand\Wp{Wlp}\def\cprime{$'$}

\end{document}